\documentclass[reqno]{amsart}

\usepackage{mathabx, amscd}
\usepackage[utf8]{inputenc}
\usepackage{amsmath}
\usepackage{amsfonts}
\usepackage{amsthm}
\usepackage{amssymb}
\usepackage[normalem]{ulem}
\usepackage{enumerate}
\usepackage{tikz}
\usepackage{xcolor}
\usepackage[hidelinks]{hyperref}
\usepackage{mathrsfs}
\hypersetup{colorlinks,breaklinks,
             urlcolor=blue,
             linkcolor=blue}

\theoremstyle{plain}
\newtheorem{thm}{Theorem}[section]
\newtheorem{lem}[thm]{Lemma}
\newtheorem{pro}[thm]{Proposition}
\newtheorem{cor}[thm]{Corollary}
\theoremstyle{definition}
\newtheorem{defn}[thm]{Definition}
\newtheorem{rem}[thm]{Remark}

\newtheorem{exa}[thm]{Example}

\newcommand{\citep}{\cite}
\newcommand{\F}{\mathbb{F}}
\newcommand{\Fqm}{\F_{q^m}}
\newcommand{\Fq}{\F_q}
\newcommand{\Id}{\mathrm{Id}}

\newcommand{\<}{\left<}
\renewcommand{\>}{\right>}

\newcommand{\NN}{\mathbb{N}}
\newcommand{\Np}{\mathbb{N}^+}

\renewcommand{\r}{\rho}
\newcommand{\x}{\mathbf{x}}

\newcommand{\R}{\mathcal{R}}

\newcommand{\B}{\mathcal{B}}
\newcommand{\AC}{\mathcal{A}}
\newcommand{\M}{\mathbf{M}}
\newcommand{\T}{\tau}
\renewcommand{\aa}{\mathbf{a}}

\newcommand{\I}{\mathcal{I}}
\newcommand{\0}{\mathbf{0}}

\newcommand{\ZZ}{\mathbb{Z}}

\newcommand{\SE}{\Sigma(E)}
\newcommand{\GrE}{\mathbb{G}_r(E)}
\newcommand{\GkE}{\mathbb{G}_k(E)}

\title{Shellability and Homology of $q$-complexes and $q$-matroids}
	\author{Sudhir R. Ghorpade}
	\address{Department of Mathematics, 
		Indian Institute of Technology Bombay,\newline \indent
		Powai, Mumbai 400076, India}
\email{\href{mailto:srg@math.iitb.ac.in}{srg@math.iitb.ac.in}}
\thanks{Sudhir Ghorpade is partially supported by DST-RCN grant INT/NOR/RCN/ICT/P-03/2018 from the Dept. of Science \& Technology, Govt. of India, MATRICS grant MTR/2018/000369 from the Science \& Eng. Research Board, and IRCC award grant 12IRAWD009 from IIT Bombay.}
\author{Rakhi Pratihar}
		\address{Department of Mathematics and Statistics, 
		UiT - The Arctic University of Norway, 
\newline \indent
N-9037 Troms{\o}, Norway}
\email{\href{mailto:pratihar.rakhi@gmail.com}{pratihar.rakhi@gmail.com}}
\thanks{During the course of this work, Rakhi Pratihar was  supported by a doctoral fellowship at IIT Bombay from the 
University Grant Commission, Govt. of India (Sr. No. 2061641156). Currently, she is supported by Grant 280731 from the Research Council of Norway.}
\author{Tovohery H. Randrianarisoa}
\address{Department of Mathematical Sciences, 
Florida Atlantic University, \newline \indent
Boca Raton, FL 33431, USA}
\email{\href{mailto:tovo@aims.ac.za}{tovo@aims.ac.za}}
\thanks{During the course of this work Tovohery Randrianarisoa was supported by a postdoc fellowship at IIT Bombay from the Swiss National Science Foundation Grant No. 181446.}

\subjclass[2010]{05B35, 05A30, 52B22, 55N99}
\begin{document}

\maketitle

\begin{abstract}
		We consider a $q$-analogue of abstract simplicial complexes, called $q$-complexes, and discuss the  notion of shellability for such complexes. It is shown that $q$-complexes formed by independent subspaces of a $q$-matroid are shellable. 
		Further, we explicitly determine the homology of $q$-complexes corresponding to uniform $q$-matroids. We also outline some partial results concerning the determination of homology of arbitrary 
		shellable $q$-complexes.
\end{abstract}

\section{Introduction}
Shellability is an important and useful notion in combinatorial topology and algebraic combinatorics. 
Recall that an (abstract) simplicial complex $\Delta$ is said to be \emph{shellable} if it is pure (i.e., all its facets have the same dimension) and there is a linear ordering $F_1, \dots , F_t$ of its facets 
such that for each $j=2, \dots , t$, the complex 
$\langle F_j \rangle \cap \langle F_1, \dots F_{j-1} \rangle$ is generated by a nonempty set of maximal proper faces of $F_j$.  Here for $i=1, \dots , t$, by $\langle F_1, \dots , F_i \rangle$ we denote the complex generated by $F_1, \dots , F_i$, i.e., the smallest simplicial complex containing $F_1, \dots , F_i$.  

From a topological point of view, a shellable simplicial complex is like a wedge of spheres. In particular, the reduced homology groups are well understood. 
Shellable simplicial complexes are of importance in commutative algebra partly because their Stanley-Reisner rings (over any field) are  Cohen-Macaulay. 
Gr\"obner deformations of coordinate rings of several classes of algebraic varieties can be viewed as Stanley-Reisner rings of some simplicial complexes. 
Thus showing that these complexes are shellable becomes an effective way of establishing Cohen-Macaulayness of the corresponding coordinate rings. 
Important classes of simplicial complexes that are known to be shellable include boundary complex of a convex polytope, order complex of a ``nice'' poset (or more precisely, a bounded, locally upper semimodular poset), and matroid complexes, i.e., complexes formed by the independent subsets of matroids. For relevant background and proofs of these assertions, we refer to the monographs \cite{Stan, BH, GSSV} and the survey article of  Bj\" orner \cite{AB1}. 

We are interested in a $q$-analogue of some of these notions and results, wherein finite sets are replaced by finite-dimensional vector spaces over the finite field $\Fq$. One of our motivation comes from the recent work of Jurrius and Pellikaan \cite{JP} where the notion of a $q$-matroid is introduced and several of its properties are studied. (See also Crapo \cite{Cr} and Terwilliger \cite{Ter} where more general notions are studied.) The notion of a simplicial complex admits a straightforward $q$-analogue, and this goes back at least to Rota \cite{Rota}. 
Alder \cite{Alder} studied $q$-complexes in his thesis and defined when a $q$-complex is shellable. A natural question therefore is whether the $q$-complex of independent subspaces of a $q$-matroid is shellable. We will show in this paper that the answer is affirmative. 

Next, we consider the question of determining the homology of shellable $q$-complexes. This appears to be much harder than the classical case, and we are able to make partial progress here by way of explicitly determining the homology of $q$-spheres 
as well as the more general class of $q$-complexes formed by independent subspaces of  uniform $q$-matroids. We also describe the homology of a 
shellable $q$-complex provided it satisfies an additional hypothesis. 
A basic stumbling block (pointed out in \cite{JP} already)  is that  the notions of difference (of two sets) and complement (of a subset of a given set) do not have an obvious and unique analogue in the context of subspaces.

Our other motivation is from coding theory and the work of Johnsen and Verdure \cite{JV1} where to a $q$-ary linear code (or more generally, to a matroid), one can associate a fine set of invariants, called its Betti numbers. These are 
obtained by looking at a minimal graded free resolution of the Stanley-Reisner ring of a simplicial complex that  corresponds to the vector matroid associated to the parity check matrix of the given linear code. The question that arises naturally is whether something like Betti numbers can be defined in the context of rank metric codes, or more generally, for $q$-matroids as in \cite{JP} or going even further, for the $(q,m)$-polymatroids studied in \cite{S, BMS, GJ} or the $q$-polymatroids studied in \cite{GJLR}.  We were led to the study of shellability and homology of $q$-complexes, and especially, complexes associated to $q$-matroids with a view toward a possible topological approach to the above question. However, the question of arriving at a suitable notion of Betti numbers of rank metric codes is very far from being answered and  at the moment, the  musings above are more like a pie in the sky. 
 
This paper is organized as follows. In the next section, we collect some preliminaries and recall definitions of basic concepts such as $q$-complexes and $q$-matroids. 
In Section \ref{Sec4}, we outline a procedure called ``tower decomposition'' that 
provides a useful way to order subspaces in a $q$-complex. 
The notion of shellability for $q$-complexes is reviewed in Section \ref{Sec5} and the shellability of $q$-matroid complexes is also established in this section. 
Next, we explicitly determine the homology of $q$-spheres, and more generally,  the homology of the so called uniform $q$-complexes in Section \ref{Sec6}. Finally, our results on the homology of  arbitrary shellable $q$-complexes are described in Section \ref{sec7}. 

\section{Preliminaries} \label{Sec2}

Throughout this paper $q$ denotes a power of a prime number and $\Fq$ the finite field with $q$ elements. We fix a positive integer $n$, and denote by $E$  the $n$-dimensional vector space $\Fq^n$ over $\Fq$. By $\SE$ we denote the set of all subspaces of $E$. 
Given any $y_1, \dots , y_r \in E$, we denote by $\<y_1, \dots , y_r \>$ the $\Fq$-linear subspace of $E$ generated by $y_1, \dots , y_r$. Also, for $U,V,W\in \SE$, we often write $U=V\oplus W$ to mean that $U = V+W$ and 
$V\cap W $ is the space $\{\0\}$ consisting of the zero vector in $E$. In other words, all direct sums considered in this paper are internal direct sums. 
 We denote by $\NN$ the set of all nonnegative integers, and by $\Np$ the set of all positive integers. 

Basic definitions and results concerning simplicial complexes and matroids will not be reviewed here. These are not formally needed, but they motivate the notions and results discussed below. If necessary, one can refer to \cite{Stan} or \cite{GSSV} for simplicial complexes, shellability, etc. and to \cite{White} for basics (and more) about matroids. 
 
 \begin{defn}\label{def:qcplx}
 By a $q$-complex on $E =\Fq^n$ we mean a subset $\Delta$ of $\SE$ satisfying the 
 property that for every $A \in \Delta$, all subspaces of $A$ are in $\Delta$.

Let $\Delta$ be a $q$-complex.  Elements of $\Delta$ are called \emph{faces} of $\Delta$. 
Faces of $\Delta$ that are maximal (w.r.t. inclusion) 
are called the \emph{facets} of $\Delta$. The \emph{dimension} of  $\Delta$ 
is  $ \max\{\dim A : A \in \Delta\}$, 
 and it is 
 denoted by $\dim \Delta$.  
We say that $\Delta$ is \emph{pure} if all its facets have the same dimension. 
  \end{defn}
  
\begin{exa} \begin{enumerate}
 \item[(i)]
 Clearly, $ \SE$ is 
 a pure $q$-complex of dimension $n$. Also,  $\Delta := \{ A\in \SE : A \ne E\}$ is a pure $q$-complex of dimension $n-1$; we denote it by $S_q^{n-1}$ and call it the \emph{$q$-sphere} of dimension $n-1$. 
  \item[(ii)] If $\AC$ is any subset of $\SE$, then $\{B \in \SE : B \subseteq A \text{ for some } A\in \AC\}$
  is a $q$-complex, called the \emph{$q$-complex generated by} $\AC$, and denoted by $\langle  \AC\rangle$. In case $\AC=\{A_1, \dots , A_r\}$, we often write $\langle  \AC\rangle$ as $\langle  A_1, \dots , A_r \rangle$. 
  By convention, if $\AC$ is the empty set, then $\langle  \AC\rangle$ is defined to be the empty set. 
 \end{enumerate}
\end{exa}

We now recall the definition of a $q$-matroid, as given by Jurrius and Pellikaan~\cite{JP}.

\begin{defn} 
\label{defn:7}
A \emph{$q$-matroid} on $E$  is a pair $M = (E,\r)$, 
where $\r: \SE \to \NN$ is a function (called the \emph{rank function} of $M$) satisfying the following properties. 
\begin{enumerate}[(r1)]
\item \label{r1}$0\leq \r(A) \leq \dim A$ for all $A\in \SE$, 
\item \label{r2} If $A, B \in \SE$ with $A \subseteq B$, then $\r(A)\leq \r(B)$,
\item \label{r3} $\r(A+B) + \r(A\cap B) \leq \r(A) + \r(B)$ for all $A, B\in \SE$. 
\end{enumerate}
\end{defn}

\begin{defn} 
Let $M = (E,\r)$ 
be a $q$-matroid. We call $\r(E)$ the \emph{rank} of $M$. 
Let $A \in \SE$. 
Then $A$ 
is said to be \emph{independent} (in $M$) if $\r(A) = \dim A$; otherwise it is called \emph{dependent}. 
 Further, 
$A $ is 
a \emph{basis} (of $M$) if $A$ is independent and $\r(A)= \r(E)$. 
  
\end{defn}

\begin{exa}\label{exa:uniform}
Given a positive integer $k\le n$, consider $\rho: \SE \to \NN$ defined by 
$$
\rho(A) = \begin{cases} \dim A & \text{if } \dim A \le k, \\  k  & \text{if } \dim A > k. \end{cases}
$$
Then it is easily seen that $(E, \rho)$ is a $q$-matroid of rank $k$; this is called the \emph{uniform $q$-matroid} on $E$ of rank $k$, and it is denoted by $U_q(k,n)$. 
\end{exa}

Important properties of  independent subspaces in a $q$-matroid (which, in fact, characterize a $q$-matroid) 
are proved in \cite[Thm.~8]{JP} and recalled 
below. 

\begin{pro}
\label{thm:14}
Let $M = (E,\r)$ 
be a $q$-matroid, and let $\I$ be the family of independent subspaces in $M$. 
Then $\I$  satisfies the following four properties:
\begin{enumerate}[({i}1)]
\item \label{thm:14-i}$\I\neq \emptyset$.
\item \label{thm:14-ii}$A\in \SE$ and $B\in \I$ with $A\subseteq B$ $\Rightarrow$ $A\in \I$. 
\item \label{thm:14-iii}$A,B\in \I$ with $\dim A > \dim B$ $\Rightarrow$ there is $\x\in A\backslash B$ such that $B + \<\x\> \in \I$.
\item \label{thm:14-iv}$A, B \in \SE$ and $I, J$ 
are maximal independent subspaces of $A, B$,  
respectively 
$\Rightarrow$ there is a maximal independent subspace 
$K$ of $A+B$ such that $K \subseteq I+J$. 
\end{enumerate}
\end{pro}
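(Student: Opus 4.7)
The plan is to derive all four conditions directly from the rank axioms (r1)--(r3), using monotonicity and submodularity as the main tools. The first two are quick: (i1) holds because $0 \le \r(\{\0\}) \le \dim\{\0\} = 0$, so $\{\0\} \in \I$. For (i2), if $A \subseteq B \in \I$, choose a complement $C$ with $B = A \oplus C$; then (r3) applied to $A$ and $C$, together with (r1), yields $\dim A + \dim C = \r(B) \le \r(A) + \r(C) \le \dim A + \dim C$, which forces $\r(A) = \dim A$.

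For (i3), I would argue by contradiction: suppose no $\x \in A \setminus B$ satisfies $B + \<\x\> \in \I$. Combined with (r2) and the independence of $B$, this gives $\r(B + \<\x\>) = \dim B$ for every $\x \in A \setminus B$. Since $\dim A > \dim B$ we have $A \not\subseteq B$, and a short linear-algebra check (if $\mathbf{v} \in A \cap B$ and $\mathbf{w} \in A \setminus B$, then $\mathbf{v} + \mathbf{w} \in A \setminus B$) shows $A$ is spanned by $A \setminus B$. Picking a spanning set $\x_1, \dots, \x_k$ of $A$ drawn from $A \setminus B$ and applying (r3) inductively to $W_i := B + \<\x_1,\dots,\x_i\>$ and $B + \<\x_{i+1}\>$, both of which contain $B$, one deduces $\r(B + A) = \dim B$, contradicting $\r(A) = \dim A > \dim B$.

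The main obstacle is (i4), whose proof I would build around the following closure-type identity: \emph{if $I$ is a maximal independent subspace of $A$, then $\r(I + X) = \r(A + X)$ for every $X \in \SE$.} By maximality, for any $\mathbf{v} \in A$ the subspace $I + \<\mathbf{v}\>$ cannot be independent unless $\mathbf{v} \in I$, which together with (r1)--(r2) forces $\r(I + \<\mathbf{v}\>) = \r(I)$. A submodularity induction in the same spirit as (i3), now applied to $I + X + \<\mathbf{v}_1,\dots,\mathbf{v}_j\>$ and $I + \<\mathbf{v}_{j+1}\>$, extends this to $\r(I + X + C) = \r(I + X)$ for every $C \subseteq A$; taking $C = A$ gives the identity. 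Applying it twice, $\r(I + J) = \r(A + J) = \r(A + B)$. Finally, pick any maximal-by-inclusion independent subspace $K$ of $I + J$ (obtained by extending $\{\0\}$ one dimension at a time); the same closure argument applied to $K$ inside $I + J$ forces $\dim K = \r(I + J) = \r(A + B)$, so $K$ is a maximal independent subspace of $A + B$ contained in $I + J$, as required.
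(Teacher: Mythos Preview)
Your argument is correct. Each step checks out: (i1) and (i2) are immediate, the submodularity induction in (i3) is clean (the key point that $A$ is spanned by $A\setminus B$ whenever $A\not\subseteq B$ is exactly what is needed to run the induction over generators), and your closure identity $\r(I+X)=\r(A+X)$ for $I$ maximal independent in $A$ is both correctly proved and correctly applied twice to obtain $\r(I+J)=\r(A+B)$, after which any maximal independent $K\subseteq I+J$ automatically has $\dim K=\r(A+B)$ and is therefore maximal in $A+B$.

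As for comparison with the paper: the paper does not give its own proof of this proposition at all---it simply cites \cite[Thm.~8]{JP} and recalls the statement. So your write-up supplies what the paper outsources. Your route via the closure identity is in fact the standard one (and essentially what Jurrius--Pellikaan do as well), so there is no substantive methodological difference to discuss; you have just made the argument self-contained.
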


It is shown in \cite{JP} that if $\I$ is an arbitrary subset of $\SE$ satisfying (i1)--(i4), then there is a unique $q$-matroid $M_{\I} = ( E, \r^{ }_{\I})$ whose 
rank function $\r^{ }_{\I}$ is given by 

$$
\r^{ }_{\I}(A) = \max\{ \dim B\colon B\in \I, \ B \subseteq A \} \quad \text{for  } A \in \SE; 
$$ 
moreover, $\I$ is precisely the family of independent subspaces in $M_{\I}$.

We now recall some fundamental properties of bases of a $q$-matroid, which provide yet another characterization of $q$-matroids. For a proof, see \cite[Thm. 37]{JP}.

\begin{pro}
\label{pro:2}
The set $\B$ of bases of a $q$-matroid on $E$ 
satisfies the following. 
\begin{enumerate}[({b}1)] 
\item $\B \neq \emptyset$.
\item 
If $B_1,B_2\in \B$ are such that  $B_1 \subseteq B_2$, then 
$B_1=B_2$.
\item 
 If $B_1 , B_2 \in \B$ and  $C \in \SE$ satisfy 
 $B_1 \cap B_2 \subseteq C \subseteq B_2$ and  \hbox{$\dim B_1 = \dim C + 1$}, then 
 there is $x\in B_1$ such that $C + \<x\>\in \B$.
\item 
 If $A_1,A_2 \in \SE$ and if $I_j$ is a maximal element of $\{B\cap A_j : B \in \B\}$ (with respect to inclusion) for $j=1,2$, then there is a maximal element  $J$ of \hbox{$\{B\cap (A_1+A_2) : B \in \B\}$} such that $J \subseteq I_1+I_2$. 
\end{enumerate}
\end{pro}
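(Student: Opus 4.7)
The plan is to derive each of (b1)--(b4) directly from the independence axioms (i1)--(i4) of Proposition~\ref{thm:14}, working with the characterization of a basis as a maximal element of $\I$. First I will note that by (i3) any two maximal elements of $\I$ have the same dimension, equal to $\r(E)$; otherwise (i3) would let us enlarge the smaller one. A single observation will do most of the work for (b3) and (b4): for every $A\in \SE$, the maximal elements of $\{B\cap A : B\in \B\}$ coincide with the maximal independent subspaces of $A$. Indeed, $B\cap A$ is independent by (i2); conversely, given any independent $H\subseteq A$, repeated application of (i3) extends $H$ to a basis $B$, and then $H\subseteq B\cap A$, with equality when $H$ is maximal in $A$.

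Given this observation, (b1) will follow from (i1) together with finite-dimensionality: pick any element of $\I$ and extend it via (i3) to a maximal one, which exists because $\dim A\le n$ for all $A\in \SE$. Property (b2) is then immediate, since both bases would have dimension $\r(E)$. For (b3), the plan is to observe that $C\subseteq B_2\in \I$ forces $C\in \I$ by (i2), and then to apply (i3) with $A=B_1$ and $B=C$ to obtain $x\in B_1\setminus C$ with $C+\<x\>\in \I$. A dimension count then yields $\dim(C+\<x\>) = \dim C + 1 = \dim B_1 = \r(E)$, so $C+\<x\>$ is a basis. Note in passing that the hypothesis $B_1\cap B_2\subseteq C$ is not actually used here; it seems to be included only to mirror the classical matroid basis exchange.

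For (b4), the observation identifies each $I_j$ with a maximal independent subspace of $A_j$. The plan is then to invoke (i4) to produce a maximal independent subspace $K$ of $A_1+A_2$ with $K\subseteq I_1+I_2$, extend $K$ to a basis $B$, and invoke the observation once more: $K = B\cap (A_1+A_2)$, and this intersection is maximal in $\{B'\cap (A_1+A_2) : B'\in \B\}$, because any strictly larger such intersection would be an independent subspace of $A_1+A_2$ properly containing $K$, contradicting the maximality of $K$ there. Taking $J:=K$ will complete the proof.

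The main subtle point---and the only genuine obstacle---is establishing the observation cleanly, since one must verify that independent subspaces can always be enlarged within a fixed ambient space $A$ via (i3), not just globally. Everything else consists of direct applications of (i1)--(i4) together with elementary dimension counting, so no ideas beyond the independence axioms should be required.
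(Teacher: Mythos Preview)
Your argument is correct. The paper itself does not give a proof of this proposition; it simply records the statement and cites \cite[Thm.~37]{JP} for the verification. Your derivation of (b1)--(b4) directly from the independence axioms (i1)--(i4) is the natural route and is essentially what the cited reference does: bases are maximal independent subspaces, (i3) forces all of them to have dimension $\r(E)$, and the key bridge for (b4) is precisely your observation that maximal elements of $\{B\cap A : B\in\B\}$ coincide with maximal independent subspaces of $A$. Your remark that the hypothesis $B_1\cap B_2\subseteq C$ in (b3) is not actually used in this direction is also correct; that condition is there to make the axiom system cryptomorphic (it is needed when going back from (b1)--(b4) to the rank or independence axioms), not for the implication you are proving here.

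One small comment on your closing caveat: the ``subtle point'' you flag about enlarging within a fixed ambient $A$ is in fact not needed for your observation. You only extend an independent $H\subseteq A$ to a basis $B$ \emph{globally} via (i3), and then compare $H$ with $B\cap A$; no step requires the new vector produced by (i3) to lie in $A$. So the argument is even cleaner than you suggest.
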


The third property here 
is called the {\em basis exchange property}. It can be used together with (b1) and (b2) to deduce that 
any two bases of a $q$-matroid have the same dimension. See, for example, \cite[Prop. 40]{JP}. 

As a consequence of Proposition~\ref{pro:2}, we shall 
derive the following {\em dual basis exchange property}, which will be useful to us in the sequel. 

\begin{cor}
\label{cor:1}
Let $M = (E,\r)$ 
be a $q$-matroid. 
Let $B_1 , B_2$ be bases of $M$ with $B_1 \neq B_2$ and let $y\in B_2\backslash B_1$. 
Then there exist  
$U \in \SE$ and $x\in B_1\backslash B_2$ such that 
\begin{equation}\label{eq:dbep}
B_1 \cap B_2 \subseteq U, \quad B_1 = U \oplus \<x\>, \quad \text{and} \quad \text{$U\oplus \<y\>$ is a basis of $M$.}
\end{equation}
\end{cor}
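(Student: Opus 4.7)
The plan is to construct $U$ by iteratively augmenting a carefully chosen independent subspace toward $B_1$, using property (i3) of Proposition~\ref{thm:14}. Since all bases of $M$ have the same dimension $k := \r(E)$ (as noted after Proposition~\ref{pro:2}) and $B_1 \neq B_2$ forces $\dim(B_1 \cap B_2) \le k-1$, I would begin with the subspace $B^{(0)} := (B_1 \cap B_2) + \<y\>$, which is contained in $B_2$ and hence independent by (i2); its dimension is $\dim(B_1 \cap B_2) + 1$ because $y \in B_2 \setminus B_1$ is in particular not in $B_1 \cap B_2$.

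Next, I would apply (i3) repeatedly with the independent space $A = B_1$ to extend $B^{(0)}$ through a chain $B^{(0)} \subsetneq B^{(1)} \subsetneq \cdots \subsetneq B^{(r)}$ where each $B^{(j)} = B^{(j-1)} + \<x_j\>$ for some $x_j \in B_1 \setminus B^{(j-1)}$, terminating when $\dim B^{(r)} = k$. Being independent of dimension $k$, the subspace $B^{(r)}$ is a basis of $M$. The crucial observation is that every $x_j$ must lie in $B_1 \setminus B_2$: if $x_j \in B_2$ then $x_j \in B_1 \cap B_2 \subseteq B^{(0)} \subseteq B^{(j-1)}$, contradicting $x_j \notin B^{(j-1)}$. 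Setting $U := (B_1 \cap B_2) + \<x_1, \dots, x_r\>$ then gives $U \subseteq B_1$, $B_1 \cap B_2 \subseteq U$, and $U + \<y\> = B^{(r)}$ is a basis; since $y \notin B_1 \supseteq U$, comparing dimensions forces $\dim U = k-1$, so $U$ is a hyperplane of $B_1$.

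To finish, I would pick any $x \in B_1 \setminus U$: then $B_1 = U \oplus \<x\>$ by dimension, and the same argument as above (if $x \in B_2$ then $x \in B_1 \cap B_2 \subseteq U$) yields $x \in B_1 \setminus B_2$. I don't foresee any real obstacle beyond choosing the right starting point $B^{(0)}$; the two features that $B^{(0)}$ already contains $\<y\>$ (so $y$ persists in the final basis) and $B_1 \cap B_2$ (so that every augmenting vector must come from $B_1 \setminus B_2$) are exactly what make the argument go through. The degenerate case $r = 0$, where $B_1 \cap B_2$ is already a hyperplane of $B_1$, requires no augmentation and is handled by the same definitions, taking $U = B_1 \cap B_2$.
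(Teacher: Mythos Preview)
Your proof is correct and takes a genuinely different route from the paper's. The paper proceeds by induction on $s := r - \dim(B_1 \cap B_2)$: in the inductive step it uses the basis exchange property (b3) of Proposition~\ref{pro:2} to build an auxiliary basis $B_2' \ni y$ with $\dim(B_1 \cap B_2') > \dim(B_1 \cap B_2)$, and then invokes the induction hypothesis on the pair $B_1, B_2'$. By contrast, you avoid (b3) and the induction on $s$ entirely, instead starting from the independent subspace $(B_1 \cap B_2) + \langle y \rangle \subseteq B_2$ and repeatedly applying the augmentation axiom (i3) with $A = B_1$ until a basis is reached; the key observation that each augmenting vector must lie in $B_1 \setminus B_2$ falls out immediately from having $B_1 \cap B_2$ inside the starting space. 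Your argument is shorter and more direct, and it shows that the result is really a consequence of (i1)--(i3) alone, whereas the paper's presentation routes through the basis axioms.
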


\begin{proof}
	Let $r:= \r(M)$ and $s:= r - \dim B_1\cap B_2$. Note that $1\le s \le r$. We will use (finite) induction on $s$.
	If $s=1$, then $U:= B_1\cap B_2$ and any $x \in B_1\backslash B_2$ clearly satisfy \eqref{eq:dbep}. Now suppose $s>1$ and the result holds for smaller values of $s$. Then $\dim B_1\cap B_2\leq r-2$, and so we can find $A\in \SE$ 
	and $y'\in B_2 \setminus B_1$ such that 
	$$
	 B_1 \cap B_2 \subseteq A \subseteq B_2  \quad \text{and} \quad  B_2 = A \oplus \<y\> \oplus \<y'\>.
	$$
Let $C:= A \oplus \<y\>$. Clearly, 	$B_1 \cap B_2 \subseteq C \subseteq B_2$ and $\dim B_1 = \dim C + 1$. So by (b3) in  Proposition~\ref{pro:2}, there is $x'\in B_1\backslash B_2$ such that $C\oplus\<x'\>$ is a basis of $M$.
Let $B_2' := C\oplus\<x'\>$.  Then 
$y \in B'_2 \backslash B_1$ 
and  $\dim B_1\cap B'_2 > \dim B_1 \cap B_2$. 
Hence, by the induction hypothesis, there is $U\in \SE$ and $x \in B_1\setminus B'_2$ such that  
$$
B_1 \cap B'_2 \subseteq U, \quad B_1 = U \oplus \<x\>, \quad \text{and} \quad \text{$U\oplus \<y\>$ is a basis of $M$.}
$$
Now observe that  $B_1\cap B_2 \subseteq B_1\cap A \subseteq B_1\cap C \subseteq B_1\cap B'_2$.  
Consequently, $x\in B_1\backslash B_2$ and \eqref{eq:dbep} holds. This completes the proof. 
	\end{proof}

We end this section by noting that if $M = (E,\r)$ is a $q$-matroid on $E = \Fq^n$ with $\r(M) = r$, then it follows from Proposition~\ref{thm:14} that $M$ defines a $q$-complex $\Delta_M$ 
whose faces are precisely the independent subspaces of $M$, i.e., those $\Fq$-linear subspaces $F$ of $\Fq^n$ such that $\dim F = \r(F)$. 
Moreover, the facets of $\Delta_M$ are precisely the bases of $M$. 
We will refer to $\Delta_M$ as the \emph{$q$-complex associated to $M$}. 
Since any two bases of $M$ have the same dimension $r$, it is clear that $\Delta_M$ is pure of dimension $r$. 
By a \emph{$q$-matroid complex} on $E$, we shall mean the $q$-complex associated to a $q$-matroid on $E$. 
Following Jurrius and Pellikaan \cite{JP}, a nontrivial example of $q$-matroid complex is provided by the following. 

\begin{exa}\label{exa:rankmetricmatroids}
Let $C$ be a (vector) rank metric code of length $n$ over an extension $\Fqm$ of $\Fq$, i.e., let $C$ be an $\Fqm$-linear subspace of $\Fqm^n$. Suppose $\dim_{\Fqm} C = k$. Let $G$ be a generator matrix of $C$, i.e., a $k \times n $ matrix with entries in $\Fqm$ whose rows form a basis of $C$. Given an $\Fq$-linear subspace $A$ of $E=\Fq^n$ with $\dim A =r$, let $Y_A$ denote a generator matrix of $A$, i.e., a $r\times n$ matrix with entries in $\Fq$  whose rows form a basis of $A$, and let $\rho^{ }_C(A) : = \mathrm{rank}(GY_A^T)$, where $Y_A^T$ denotes the transpose of $Y_A$.  It is shown in \cite[\S~5]{JP} that $(E, \rho^{ }_C)$ is a $q$-matroid of rank $k$. Hence
$$
\Delta_C:= \{A \in \SE  :  \mathrm{rank}(GY_A^T) = \dim A\}
$$
is a pure $q$-complex of dimension $k = \dim C$.  
\end{exa}

\section{Tower Decompositions} \label{Sec4}

Suppose $\Delta$ is a pure  $q$-complex on $\Fq^n$ of dimension $r$. Then its facets are certain $r$-dimensional subspaces of $\Fq^n$ and \textit{a priori} it is not clear how they can be linearly ordered. In this section, we consider a variant of row reduced echelon forms, called tower decompositions, which will allow us to put a total order on such subspaces. 

Fix a positive integer $r \le n$ and let $\GrE$ denote the Grassmannian consisting of all $r$-dimensional subspaces of $E=\Fq^n$. Given any $U\in \GrE$,  
let $\M_U$ be a generator matrix of $U$ in row echelon form, i.e., let $\M_U$ be a $r\times n$ matrix in row echelon form whose row vectors form a basis of $U$. 
We denote by $u_r,u_{r-1},\dots,u_1$ the row vectors of $\M_U$ so that 
\[
\M_U = \begin{bmatrix}
 u_r\\
 \vdots\\
 u_1
      \end{bmatrix}
\] 
We define subspaces $U_1,\dots,U_r$ of  $E$ 
and subsets $\overline{U}_1, \dots, \overline{U}_r$ of $E \setminus \{\0\}$ by 
\begin{equation}\label{eq:UiUbari}
U_i : = \< u_1, \dots , u_i\> \quad \text{and} \quad \overline{U}_i := U_i\backslash U_{i-1} \quad \text{for } i=1, \dots , r,
\end{equation}
where, by convention $U_0 := \{\0\}$. Further, we define 
\[
\T(U):=(U_1,U_2,\cdots,U_r),
\]
and we shall refer to this as the \emph{tower decomposition} of $U$. Observe that although  $\M_U$ (or equivalently, the vectors $u_1, \dots , u_r$) need not be uniquely determined by $U$, the subspaces $U_i$ (and hence the subsets $ \overline{U}_i $) are uniquely determined by $U$. To see this, it suffices to note that there is a unique generator matrix of $U$, say $\M^*_U$, which is in reduced row echelon form, and it is easily seen that the corresponding subspace $U^*_i$ is equal to $U_i$ for  each $i=1, \dots  , r$. Thus, the tower decomposition $\T(U)$ of $U$ depends only on $U$. Moreover, it is obvious that $\T(U)$ determines $U$, since $U= U_r$. 
Note also that the set $U\setminus \{\0\}$ of nonzero elements of $U$ has the disjoint union decomposition  
\begin{equation}\label{eq:disjunion}
U\setminus \{\0\} = 
\coprod_{i=1}^r  \overline{U}_i.
\end{equation}

\begin{defn}
Given any nonzero vector $u\in \Fq^n$, the \emph{leading index} of $u$, denoted $p(u)$, is defined to be the least positive integer $i$ such that the $i$-th entry of $u$ is nonzero. Further, given a subset $S$ of $\Fq^n$, the \emph{profile} $p(S)$ of $S$ is defined to be  the union of the leading indices of all of its nonzero elements, i.e., 
$$
p(S) = \left\{ p(u)  :  u\in  S\backslash\{\0\} \right\}.
$$
Note that the profile of $S$ can be the empty set if $S$ contains no nonzero vector. 
\end{defn}

\begin{lem} \label{lem::4}
Let $U \in \GrE$ and let $u_r, \dots , u_1$ be the rows of a generator matrix $\M_U$ of $U$ in row echelon form.
Then $p(u_1)> \cdots > p(u_r)$.
Further, given any $i \in \{1, \dots , r\}$, if $U_i$, $\overline{U}_i $ are as in \eqref{eq:UiUbari}, then $p(\overline{U}_i)= \{p(u_i)\}$, and 
for any $u\in U\setminus \{ \0\}$, 
\[  
u\in \overline{U}_i \Longleftrightarrow p(u)= p(u_i).
\]
\end{lem}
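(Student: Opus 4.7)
The plan is to handle the first assertion as an immediate consequence of what ``row echelon form'' means, and then to prove the remaining two assertions together by computing, for an arbitrary nonzero $u \in U$ with coordinate expansion $u = a_1 u_1 + \cdots + a_r u_r$, the leading index $p(u)$ in terms of the highest index $k$ with $a_k \neq 0$.

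For the first assertion, by the very definition of row echelon form, the leading indices of the rows strictly increase as one reads the matrix from top to bottom. Since the rows of $\M_U$ are written top-to-bottom as $u_r, u_{r-1}, \dots, u_1$, this gives $p(u_r) < p(u_{r-1}) < \cdots < p(u_1)$, as required. Note that this also tells us that the $p(u_i)$ are pairwise distinct, and that $u_1, \dots, u_r$ are linearly independent (and so form a basis of $U$); in particular $u_i \in U_i \setminus U_{i-1} = \overline{U}_i$ for every $i$.

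Next I would take any $u \in U \setminus \{\0\}$ and write $u = a_1 u_1 + \cdots + a_r u_r$ with $a_j \in \Fq$. Let $k := \max\{j : a_j \neq 0\}$; I claim that $p(u) = p(u_k)$. Indeed, for any position $\ell < p(u_k)$ and any $j \leq k$, we have $p(u_j) \geq p(u_k) > \ell$ by the first assertion, so the $\ell$-th entry of $u_j$ vanishes by the definition of the leading index; since $a_j = 0$ for $j > k$, the $\ell$-th entry of $u$ is zero. At position $p(u_k)$, the entry $(u_k)_{p(u_k)}$ is nonzero by definition, while $(u_j)_{p(u_k)} = 0$ for $j < k$ (because $p(u_j) > p(u_k)$) and $a_j = 0$ for $j > k$, so the $p(u_k)$-th entry of $u$ equals $a_k\,(u_k)_{p(u_k)} \neq 0$. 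Hence $p(u) = p(u_k)$.

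Finally, the two remaining assertions both fall out of this computation. The condition $u \in \overline{U}_i = U_i \setminus U_{i-1}$ is equivalent to $k = i$ in the expansion above, and since the $p(u_j)$ are distinct, the condition $p(u) = p(u_i)$ is also equivalent to $k = i$; thus $u \in \overline{U}_i \iff p(u) = p(u_i)$, which is the stated equivalence. In particular $p(\overline{U}_i) \subseteq \{p(u_i)\}$, and the reverse inclusion holds because $u_i \in \overline{U}_i$. The main (very minor) obstacle in executing this plan is simply keeping the index conventions straight, since the rows of $\M_U$ are indexed in reverse and the tower $U_1 \subset \cdots \subset U_r$ is built in the opposite direction; once one fixes that, the argument is essentially a bookkeeping exercise on the positions of leading nonzero entries.
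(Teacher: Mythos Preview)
Your proof is correct and follows essentially the same approach as the paper: both compute the leading index of a linear combination $u = \sum a_j u_j$ as $p(u_k)$ where $k$ is the largest index with $a_k \ne 0$, and deduce the remaining claims from this. The only cosmetic difference is that the paper phrases the final equivalence via the disjoint-union decomposition $U \setminus \{\0\} = \overline{U}_1 \sqcup \cdots \sqcup \overline{U}_r$, whereas you argue it directly from the distinctness of the $p(u_j)$.
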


\begin{proof}
Since $\M_U$ has rank $r$ and it is in row-echelon form, it is clear that $u_1, \dots , u_r$ are nonzero and $p(u_1)> \cdots > p(u_r)$. Now fix $i \in \{1, \dots , r\}$. Then 
$p(u_i) < p(u_j)$ for $1\le j < i$. 
Consequently, if $u= c_1u_1+ \cdots + c_iu_i$ for some $c_1, \dots , c_i \in \Fq$ with $c_i\ne \0$, then 
$p(u) = p(c_iu_i) = p(u_i)$. This shows that $p(\overline{U}_i)= \{p(u_i)\}$. The last assertion follows from this 
together with \eqref{eq:disjunion}. 
\end{proof}

Fix an arbitrary total order $\prec$ on $\Fq$ such that $0 \prec 1 \prec \alpha$  for all $\alpha \in \Fq\backslash \{0,1\}$. This extends lexicographically to a total order on $E$, 
which we also denote by~$\prec$. 
For $v,w\in E=\Fq^n$, we may write $v \preceq w$ if $v\prec w$ or $v=w$. 


\begin{lem}\label{lem:7}
Let $v,w$ be nonzero vectors in $E= \Fq^n$. If $p(v)<p(w)$, then $w\prec v$.
\end{lem}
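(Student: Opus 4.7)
The plan is to unwind the definitions directly. Write $i := p(v)$ and $j := p(w)$, so by hypothesis $i < j$. By the definition of leading index, the first $i-1$ coordinates of $v$ vanish, the $i$-th coordinate $v_i$ is nonzero, and likewise the first $j-1$ coordinates of $w$ vanish. Since $i < j$, in particular $w_i = 0$.

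Next I would compare $v$ and $w$ lexicographically coordinate-by-coordinate. For $1 \le k \le i-1$, both $v_k$ and $w_k$ equal $0$, so $v$ and $w$ agree on these coordinates. At position $i$, we have $w_i = 0$ and $v_i \in \Fq \setminus \{0\}$. By the assumption on $\prec$ on $\Fq$ (namely $0 \prec 1 \prec \alpha$ for all $\alpha \in \Fq \setminus \{0,1\}$), the element $0$ is the minimum of $\Fq$ under $\prec$, and hence $w_i = 0 \prec v_i$. By the definition of the lexicographic extension of $\prec$ to $E = \Fq^n$, this immediately yields $w \prec v$, as desired.

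The argument is entirely routine; there is no real obstacle, only bookkeeping. The only point worth flagging is the need to observe that the convention $0 \prec 1 \prec \alpha$ forces $0$ to be the strict minimum of $\Fq$, which is what makes the comparison at coordinate $i$ go in the correct direction. Given this, the lemma follows from one application of the definition of lexicographic order.
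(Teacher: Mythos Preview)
Your proof is correct and follows essentially the same approach as the paper's: both compare $v$ and $w$ at the coordinate $i = p(v)$, noting that $w_i = 0$ while $v_i \ne 0$, and conclude $w \prec v$ from the lexicographic definition. Your version is slightly more explicit in spelling out that the convention $0 \prec 1 \prec \alpha$ makes $0$ the minimum of $\Fq$, which the paper leaves implicit.
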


\begin{proof}
Let  $i \in \{1,\ldots,n\}$ be such that $p(v) =i$. Suppose $p(w) > i$. Then the $j$th coordinate $w_j$ of $w$ is  $0$ for $1 \leq j \leq i$, whereas the $i$-th coordinate of $v$ is nonzero. Hence it is clear from the definition of $\prec$ that $w\prec v$.
%
\end{proof}


 In what follows, for any nonempty subset $S$ of $E= \Fq^n$, 
we denote by $\min S$ the least element of $S$ with respect to the total order $\prec$ on $E$ defined above. 
Some simple observations concerning this notion are recorded below for ease of reference.

\begin{lem}\label{lem:7_1}
Let $S$ be a  nonempty subset of  $E= \Fq^n$. 
\begin{enumerate}
\item[{\rm (i)}] If  $S$ is closed with respect to multiplication by nonzero scalars (for example, if $S = \overline{U}_i$ for some $i$, where $\overline{U}_i$ are as in \eqref{eq:UiUbari} for some subspace $U\in \GrE$),  then the first nonzero entry of the vector $\min S$ in $\Fq^n$ is necessarily $1$.
\item[{\rm (ii)}] If $S = U \setminus \{\mathbf{0}\}$ is the set of all nonzero vectors in some subspace $U\in \GrE$, then 
$\min S = \min \overline{U}_1$, where $\overline{U}_1$ is as in \eqref{eq:UiUbari}. 
\end{enumerate}
\end{lem}

\begin{proof}
The assertion in (i) is clear since $1 \prec \alpha$ for all nonzero $\alpha \in \Fq$. 
To prove (ii),  let $U \in \GrE$ and let 
$\overline{U}_i$, $1\le i \le r$, be as in \eqref{eq:UiUbari}. Write  $u:= \min  \overline{U}_1$. 
Then  for each $v\in \overline{U}_2 \cup \dots \cup \overline{U}_r$,  by Lemma~\ref{lem::4} we see that $p(u) > p(v)$, and hence  $u \prec v$, thanks to Lemma~\ref{lem:7}.  Thus, from \eqref{eq:disjunion}, we obtain $u = \min (U \setminus \{\mathbf{0}\})$, as desired.
\end{proof}

We are now ready to define a nice total order on $\GrE$.

\begin{defn}\label{defn:13}
Let $U, V \in \GrE$ and let $\T(U) =(U_1,\dots,U_r)$ and $\T(V)=(V_1,\dots,V_r)$ be the tower decompositions of $U$ and $V$, respectively. Define $U \preccurlyeq V$ if either $U=V$ or if there exists a positive integer $e \le r$ such that 
$$
U_j = V_j \text{ for } 1\le j < e , \quad U_e \ne V_e,  \quad \text{and} \quad \min  \overline{U}_e \prec \min \overline{V}_e.
$$
\end{defn}

\begin{lem} 
The relation 
$\preccurlyeq$  defined in Definition \ref{defn:13} is a total order on $\GrE$.
\end{lem}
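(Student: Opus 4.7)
The plan is to verify the four axioms of a total order: reflexivity, antisymmetry, transitivity, and comparability. Reflexivity is built into Definition~\ref{defn:13}, so I will focus on the remaining three.

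For antisymmetry, suppose $U\neq V$ and both $U\preccurlyeq V$ and $V\preccurlyeq U$ hold. Each inequality comes with a witness index, but I would argue that both witnesses must equal the smallest $j$ with $U_j\neq V_j$: if the witness $e$ for $U\preccurlyeq V$ were larger than some $j_0 < e$ with $U_{j_0}\neq V_{j_0}$, that would contradict the condition $U_j=V_j$ for $j<e$. Hence the two witnesses coincide, and Definition~\ref{defn:13} forces $\min\overline{U}_e\prec\min\overline{V}_e\prec\min\overline{U}_e$, contradicting that $\prec$ is a strict order on $E$.

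For comparability, given $U\neq V$, let $e$ be the least index in $\{1,\dots,r\}$ with $U_e\neq V_e$; such an $e$ exists because $U_r=U\neq V=V_r$. The key lemma (which I expect to be the main technical point) is that $\min\overline{U}_e\neq\min\overline{V}_e$. To see this, set $w := \min\overline{U}_e$; then $w\in U_e\setminus U_{e-1}$ and, since $\dim U_e=\dim U_{e-1}+1$, we have $U_e = U_{e-1}\oplus\langle w\rangle$. If we also had $w=\min\overline{V}_e$, then by the same reasoning $V_e = V_{e-1}\oplus\langle w\rangle$; but $U_{e-1}=V_{e-1}$ by minimality of $e$, forcing $U_e=V_e$, a contradiction. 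So $\min\overline{U}_e$ and $\min\overline{V}_e$ are distinct elements of $E$, and totality of $\prec$ selects which of $U\preccurlyeq V$ or $V\preccurlyeq U$ holds (with witness $e$ in both cases).

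For transitivity, suppose $U\preccurlyeq V$ with witness $e_1$ and $V\preccurlyeq W$ with witness $e_2$ (the cases where $U=V$ or $V=W$ being trivial). I would split into three cases according to whether $e_1<e_2$, $e_1>e_2$, or $e_1=e_2$. In each case, a short check using the defining conditions shows that $U_j=W_j$ for $j<e$ and $\min\overline{U}_e\prec\min\overline{W}_e$, where $e:=\min\{e_1,e_2\}$. Specifically, when $e_1<e_2$ one uses $V_j=W_j$ for $j\le e_1<e_2$ to carry the inequality at $e_1$ over to $W$; the case $e_1>e_2$ is symmetric; and when $e_1=e_2=e$, strict comparability at $e$ is preserved by transitivity of $\prec$, and the inequality $\min\overline{U}_e\prec\min\overline{W}_e$ together with the argument used for comparability implies $U_e\neq W_e$. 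In all three cases, the resulting data supplies the witness required by Definition~\ref{defn:13} for $U\preccurlyeq W$.
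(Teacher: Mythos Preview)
Your proof is correct and follows essentially the same approach as the paper's: both verify comparability by observing that the witness index $e$ is forced to be the least $j$ with $U_j\neq V_j$ and then use $U_e=U_{e-1}\oplus\langle\min\overline{U}_e\rangle$ to conclude $\min\overline{U}_e\neq\min\overline{V}_e$, and both handle transitivity by the same three-case split on the relative sizes of the two witness indices. The only cosmetic difference is that you treat antisymmetry explicitly, whereas the paper leaves it implicit in the uniqueness of the witness index established during the comparability argument.
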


\begin{proof}
Clearly, $\preccurlyeq$ is reflexive.  Next, let  $U, V \in \GrE$ and let 
$\T(U) =(U_1,\dots,U_r)$ and $\T(V)=(V_1,\dots,V_r)$ be their tower decompositions. 
If $U\ne V$, then there exists a unique positive integer $e \le r$ such that 
$U_j = V_j$  for  $1\le j < e $ and  $U_e \neq V_e$. Let $u: =  \min  \overline{U}_e $ and $v:=  \min  \overline{V}_e$. 
Observe that $U_e = U_{e-1} \oplus \< u\>$ and  $V_e = V_{e-1} \oplus \< v\>$. Since $U_{e-1} = V_{e-1}$ and 
$U_e \neq V_e$, it follows that $u\ne v$. Hence either  $u \prec v$ or  $v \prec u$. 
This shows that any two elements of $\GrE$ are comparable with respect to $\preccurlyeq$. 

It remains to show the transitivity of $\preccurlyeq$. To this end, suppose 
$U \preccurlyeq V$ and $V \preccurlyeq W$ for some $W \in \GrE$. Let $\T(W)=(W_1,\dots,W_r)$ be the tower decomposition of $W$. If $U=V$ or if $V=W$, then clearly $U\preccurlyeq W$. Suppose $U\ne V$ and $V\ne W$. Then there are unique 
integers $e, d \in \{1, \dots , r\}$ such that 
$$
U_j = V_j \text{ for } 1\le j < e , \quad U_e \ne V_e,  \quad \text{and} \quad \min  \overline{U}_e \prec \min \overline{V}_e.
$$
and
$$
V_j = W_j \text{ for } 1\le j < d , \quad V_d \ne W_d,  \quad \text{and} \quad \min  \overline{V}_d \prec \min \overline{W}_d.
$$
First, suppose $e<d$. Then it is clear that 
$$
U_j = V_j = W_j \text{ for } 1\le j < e , \quad U_e \ne V_e =W_e,  \quad \text{and} \quad \min  \overline{U}_e \prec \min \overline{V}_e = \min \overline{W}_e. 
$$
Hence $U\preccurlyeq W$. Likewise, if we suppose $d < e$, then 
$$
U_j = V_j = W_j \text{ for } 1\le j < d , \quad U_d = V_d  \ne W_d,  \quad \text{and} \quad \min  \overline{U}_d  = \min \overline{V}_d \prec \min \overline{W}_d. 
$$
So, we again obtain $U\preccurlyeq W$. Finally, if $e=d$, then the transitivity of $\prec$ on $E$ is readily seen to imply that $U\preccurlyeq W$. Thus $\preccurlyeq$ is a total order on $\GrE$.
    \end{proof}

 \section{Shellability of $q$-matroid complexes}   \label{Sec5}
 
 In this section, we begin with the definition of shellability of a $q$-complex and an equivalent formulation of it. Next, we shall use the results of the previous section to obtain a shelling of $q$-matroid complexes.

 The following definition is a straightforward analogue of the notion of shellability for $q$-complexes recalled in the Introduction. A slightly different, but obviously equivalent, definition was given by 
 Alder \cite[Definition 1.5.1]{Alder}. 

\begin{defn}\label{defn:11}
	Let $\Delta$ be a pure $q$-complex on $E=\Fq^n$. 
	A {\em shelling} of $\Delta$
is a linear order $F_1,\dots,F_t$  on the facets of $\Delta$ such that  for each $j=2, \dots , t$, the $q$-complex 
\hbox{$\langle F_j \rangle \cap \langle F_1, \dots F_{j-1} \rangle$} is generated by a nonempty set of maximal proper faces of $F_j$. 

We say that a $q$-complex is  \emph{shellable} if it is pure and it admits a shelling.
\end{defn}

\begin{exa} (Alder \cite[Example 1.5.2]{Alder}) \label{exa:qS}
A $q$-sphere $S_q^{n-1}$ is a shellable $q$-complex on $E=\Fq^n$ of dimension $n-1$. Indeed, its facets are the $(n-1)$-dimensional subspaces of $E$, and if $F_1, \dots , F_t$ is an arbitrary listing of these facets, then it is easily seen from the formula for the dimension of the sum of two subspaces, that $\dim (F_i \cap F_j) = n-2$ for $1\le i < j \le t$. Hence, for any $j=2, \dots , t$, we see that $\{F_i\cap F_j : 1\le i < j\}$  is a nonempty set of maximal proper faces of $F_j$, which generates $\langle F_j \rangle \cap \langle F_1, \dots F_{j-1} \rangle$. Thus
$F_1, \dots , F_t$ is a shelling of  $S_q^{n-1}$. 
\end{exa}

The following characterization is analogous to the corresponding result in the classical case (see, e.g., \cite[p. 135]{GSSV})  
and it will be useful to us in the sequel. 

\begin{lem}\label{lem:ShellChar}
Let $\Delta$ be a pure $q$-complex of dimension $r$, and let  $F_1, \dots , F_t$ be a listing of the facets of $\Delta$. Then $F_1, \dots , F_t$ is a shelling of $\Delta$  if and only if for every 
$i,j \in \Np$ with $ i<j\leq t$, there exists $k\in \Np$ with $k < j$ such that 
\begin{equation}\label{Fijk}
F_i \cap F_j \subseteq F_k \cap F_j \quad \text{and} \quad \dim(F_k \cap F_j) = r-1.
\end{equation}
\end{lem}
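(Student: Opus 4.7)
The plan is to prove the equivalence by mimicking the classical argument (as in \cite[Proposition 5.4]{GSSV}), with the key observation that for a $q$-complex, a maximal proper face of an $r$-dimensional facet $F_j$ is exactly an $(r-1)$-dimensional subspace of $F_j$ (every such subspace is automatically a face by the closure axiom in Definition \ref{def:qcplx}). Throughout, I will use the fact that since $F_k$ and $F_j$ are distinct $r$-dimensional facets, the intersection $F_k \cap F_j$ is a proper subspace of $F_j$, hence $\dim(F_k \cap F_j) \le r-1$.

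For the forward direction, assume $F_1,\dots,F_t$ is a shelling and fix $i<j\le t$. Observe that $F_i\cap F_j$ lies in $\langle F_j\rangle\cap\langle F_1,\dots,F_{j-1}\rangle$ because it is a subspace of $F_j$ and of $F_i$. By hypothesis, this intersection is generated by a nonempty collection $\mathcal{M}$ of maximal proper faces of $F_j$; hence there exists $G\in\mathcal{M}$ with $F_i\cap F_j\subseteq G$. Since $G$ also belongs to $\langle F_1,\dots,F_{j-1}\rangle$, there is some $k<j$ with $G\subseteq F_k$, and therefore $G\subseteq F_k\cap F_j$. Combining $\dim G=r-1$ with $\dim(F_k\cap F_j)\le r-1$ forces $\dim(F_k\cap F_j)=r-1$ and in fact $G=F_k\cap F_j$. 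Thus $F_i\cap F_j\subseteq F_k\cap F_j$, as required.

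For the backward direction, assume the condition in \eqref{Fijk} and, for each $j\ge 2$, introduce
\[
\mathcal{G}_j := \{F_k\cap F_j : k<j \text{ and } \dim(F_k\cap F_j)=r-1\}.
\]
Applying the hypothesis to, say, $i=1$ shows $\mathcal{G}_j\ne\emptyset$. Each element of $\mathcal{G}_j$ is an $(r-1)$-dimensional subspace of $F_j$, hence a maximal proper face of $F_j$. It then suffices to prove
\[
\langle F_j\rangle\cap\langle F_1,\dots,F_{j-1}\rangle \;=\; \langle \mathcal{G}_j\rangle.
\]
The inclusion $\supseteq$ is immediate: each $F_k\cap F_j$ is a subspace of both $F_j$ and $F_k$. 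For $\subseteq$, take any face $A$ in the left-hand side; then $A\subseteq F_j$ and $A\subseteq F_i$ for some $i<j$, so $A\subseteq F_i\cap F_j$. By the assumption there exists $k<j$ with $F_i\cap F_j\subseteq F_k\cap F_j$ and $\dim(F_k\cap F_j)=r-1$, i.e., $F_k\cap F_j\in\mathcal{G}_j$; consequently $A\in\langle\mathcal{G}_j\rangle$.

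There is no real obstacle: the argument is a direct translation of the set-theoretic proof, and the only subtle point is confirming that a maximal proper face of an $r$-dimensional facet in a $q$-complex has dimension exactly $r-1$, which follows from the closure property of $q$-complexes under taking subspaces together with the fact that distinct $r$-dimensional subspaces intersect in a proper subspace.
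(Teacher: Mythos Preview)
Your proof is correct and follows essentially the same approach as the paper's own proof: both directions proceed by the standard classical argument, using that maximal proper faces of an $r$-dimensional facet are precisely its $(r-1)$-dimensional subspaces, and that distinct $r$-dimensional facets intersect in dimension at most $r-1$. Your presentation is in fact slightly more explicit in defining the generating set $\mathcal{G}_j$ and verifying both inclusions, but the logic is identical.
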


\begin{proof} 
Suppose $F_1, \dots , F_t$ is a shelling of $\Delta$. 
Let $i,j \in \Np$ with $ i<j\leq t$. 
Then $F_i \cap F_j \in \langle F_j \rangle \cap \langle F_1, \dots F_{j-1} \rangle$.
Hence 
$F_i \cap F_j \subseteq G_j$, where $G_j \in \langle F_j \rangle \cap \langle F_1, \dots F_{j-1} \rangle$ is a maximal proper face of $F_j$. Since $G_j \in \langle F_1, \dots F_{j-1} \rangle$,  there exists $k\in \Np$ with $k < j$ such that $G_j \subseteq F_k$. Thus, $G_j \subseteq F_k \cap F_j$ and moreover, 
$\dim G_j = \dim F_j -1 = r - 1$. Now, $F_k \ne F_j$, since $k< j$. Also, $\dim F_k = \dim F_j =r$. It follows that  
$\dim (F_k \cap F_j) \le r-1$. This implies that $G_j =  F_k \cap F_j$, and so \eqref{Fijk} is proved. 

Conversely, suppose for every $ i<j\leq t$, there exists 
$k < j$ such that \eqref{Fijk} holds. Let $j\in\{2, \dots, t\}$ and let $F$ be a face of $\langle F_j \rangle \cap \langle F_1, \dots F_{j-1} \rangle$. Then $F$ is a face of $F_j$ as well as $F_i$ for some $i<j$. For these $i, j$, there exists $k\in \Np$ with $k < j$ such that \eqref{Fijk} holds. Now $F \subseteq F_i \cap F_j \subseteq F_k \cap F_j $ and so $F$ is a face of $F_k \cap F_j $. It follows that $\{F_k \cap F_j : 1\le k < j \text{ and } \dim(F_k \cap F_j) = r-1\}$ constitutes a nonempty set of maximal proper faces, which generates $\langle F_j \rangle \cap \langle F_1, \dots F_{j-1} \rangle$. 
\end{proof}

We are now ready to prove the main result of this section.  Here we will make use of 
the total order $\preccurlyeq$ given in Definition~\ref{defn:13}. 
As usual, for any $U,V\in \SE$ of the same dimension, we will write $U \prec V$ to mean that $U \preccurlyeq V$ and $U\ne V$.

\begin{thm}\label{thm:shelling}
Let $M$ be a $q$-matroid on $E=\Fq^n$ of rank $r$. 
Then the $q$-complex $\Delta_M$ associated to $M$ is shellable. In fact, if  $F_1,\dots,F_t$ is an ordering of the facets of $\Delta_M$ such that $F_i\prec F_j$ for $1\le i<j \le t$, then this defines a shelling of $\Delta_M$.
\end{thm}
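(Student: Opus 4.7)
The plan is to verify the criterion of Lemma~\ref{lem:ShellChar}: given $1\leq i<j\leq t$ (so $F_i\prec F_j$), produce $k<j$ with $F_i\cap F_j\subseteq F_k\cap F_j$ and $\dim(F_k\cap F_j)=r-1$. Let $e$ be the smallest index where $\T(F_i)$ and $\T(F_j)$ disagree, and set $V:=F_{i,e-1}=F_{j,e-1}$, $u_i:=\min\overline{F}_{i,e}$, $u_j:=\min\overline{F}_{j,e}$, so that $u_i\prec u_j$ and $F_{i,e},F_{j,e}$ are distinct extensions of $V$. The strategy is to apply Corollary~\ref{cor:1} with $y=u_i$ to construct the candidate basis $F_k$, then compare tower decompositions to establish $F_k\prec F_j$.

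First I would show $u_i\in F_i\setminus F_j$. Since $u_i\prec u_j$, Lemma~\ref{lem:7} gives $p(u_i)\geq p(u_j)$. If $p(u_i)>p(u_j)$, then $p(u_i)$ lies strictly between two consecutive leading indices in $p(F_j)$, so $p(u_i)\notin p(F_j)$ and thus $u_i\notin F_j$. If $p(u_i)=p(u_j)$, then $u_i\in F_j$ would force $u_i-u_j\in F_j$ with leading index strictly greater than $p(u_i)$, so $u_i-u_j\in V$ and hence $F_{i,e}=F_{j,e}$, contradicting the choice of $e$. Now Corollary~\ref{cor:1} applied with $B_1=F_j$, $B_2=F_i$, $y=u_i$ produces an $(r-1)$-dimensional subspace $U\subseteq F_j$ containing $F_i\cap F_j$ such that $F_k:=U\oplus\langle u_i\rangle$ is a basis of $M$. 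Since $u_i\notin F_j$, we have $F_k\cap F_j=U$, so the first two requirements of Lemma~\ref{lem:ShellChar} hold. Proving $F_k\prec F_j$ is then the main obstacle.

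To compare $\T(F_k)$ with $\T(F_j)$, I would analyze the profile $p(F_k)=p(U)\cup\{q\}$, where $q$ is the unique new leading index introduced by adjoining $u_i$ to $U$. Because $V\subseteq U$, the condition $u_j\in U$ is equivalent to $p(u_j)\in p(U)$. In the cases $p(u_i)>p(u_j)$, or $p(u_i)=p(u_j)$ with $u_j\notin U$, we have $q=p(u_i)$, and the top $e$ entries of $p(F_k)$ (in decreasing order) are the leading indices of $V$ followed by $p(u_i)$, so $F_{k,\ell}=F_{j,\ell}$ for $\ell<e$, $F_{k,e}=V+\langle u_i\rangle$, and a straightforward minimization gives $\min\overline{F}_{k,e}=u_i\prec u_j$; hence $F_k\prec F_j$. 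In the remaining subcase $p(u_i)=p(u_j)$ with $u_j\in U$, we have $q=p(u_i-u_j)>p(u_i)$; moreover $q$ is not among the leading indices of $V$, because in reduced row-echelon form both $u_i$ and $u_j$ vanish at the pivot columns of $V$. If $m\leq e$ denotes the position of $q$ in the decreasingly-sorted profile $p(F_k)$, then $F_{k,\ell}=F_{j,\ell}$ for $\ell<m$, while the $m$-th largest element of $p(F_k)$ (namely $q$) strictly exceeds the $m$-th largest element of $p(F_j)$; Lemma~\ref{lem:7} then yields $\min\overline{F}_{k,m}\prec\min\overline{F}_{j,m}$, so $F_k\prec F_j$. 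The hardest part is this last subcase, since the new leading index $q$ can appear unexpectedly high in $p(F_k)$, shifting the first disagreement between $\T(F_k)$ and $\T(F_j)$ below $e$.
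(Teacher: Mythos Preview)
Your proof is correct and follows essentially the same approach as the paper: both arguments locate the first tower level $e$ where $F_i$ and $F_j$ diverge, show $u_i=\min\overline{F}_{i,e}\notin F_j$, invoke Corollary~\ref{cor:1} to produce $F_k=U\oplus\langle u_i\rangle$, and then compare tower decompositions to establish $F_k\prec F_j$. Your case analysis is organized around the new leading index $q\in p(F_k)\setminus p(U)$ rather than the paper's first disagreement level $\ell$ between $\T(F_k)$ and $\T(F_j)$, but these are equivalent bookkeeping devices (your $m$ is the paper's $\ell$), and the one nontrivial ingredient you use beyond the paper---that $u_i,u_j$ are the reduced row-echelon rows and hence vanish on $p(V)$---is easily verified.
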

 
 \begin{proof}
 
We have seen already $\Delta_M$ is a pure $q$-complex of dimension $r$. Let  $F_1,\dots,F_t$ be an ordering of the facets of $\Delta_M$ such that $F_1 \prec \dots \prec F_t$. Fix integers $i, j$ with 
$1\le i<j \le t$. We need to show that there is a positive integer $k < j$ such that \eqref{Fijk} holds. 
This will be done in several steps. First, let us denote the tower decompositions of $F_i$ and $F_j$ by 
$$
\T(F_i)=(W_1,\dots,W_r) \quad \text{and} \quad \T(F_j)=(V_1,\dots,V_r).
$$
Since  $F_i\prec F_j$, there is a unique positive integer $e\le r$ such that 
$$
W_1=V_1, \dots , W_{e-1} = V_{e-1}, \quad W_e \ne V_e,  \quad \text{and} \quad   \min \overline{W}_e  \prec  \min \overline{V}_e.
$$
Write $w:= \min \overline{W}_e$ and $ v := \min \overline{V}_e$. We claim that $w\in F_i \setminus F_j$. 
Clearly, $w\in F_i$ and $w\ne 0$. Suppose if possible $w\in F_j$. 
Since $w \prec v$,  by Lemma \ref{lem:7}, we see that we can not have $p(v) > p(w)$. Thus, $p(v)\leq p(w)$. Further, if $p(v) = p(w)$, then by Lemma \ref{lem::4}, 
$p(\overline{V}_e) = \{p(v)\} = \{p(w)\}$, and since $w \in F_j \setminus \{\0\}$, it follows from  Lemma \ref{lem::4} that $w \in \overline{V}_e$. But this contradicts the minimality of $v$ in $\overline{V}_e$ since $w \prec v$. Thus 
$p(v) < p(w)$. Now $w \in F_j \setminus \{\0\}$ with $p(w) > p(v)$ and $p(\overline{V}_e) = \{p(v)\} $. Hence   it follows from  Lemma \ref{lem::4} that $w \in \overline{V}_s$ for some positive integer $s < e$. But then 
$w \in \overline{W}_s$ and so by Lemma \ref{lem::4}, $p(\overline{W}_s) = \{p(w)\} = p(\overline{W}_e)$, which is a contradiction. This proves the claim. 

Since $w\in F_i \setminus F_j$, we use the dual basis exchange property (Corollary~\ref{cor:1}) to obtain $U\in \SE$ and $x\in F_j \setminus F_i$ such that
$$
F_i \cap F_j \subseteq U, \quad F_j = U \oplus \< x\>, \quad \text{and} \quad U \oplus \< w\> \text{is a basis of } M.
$$
The last condition implies that $U \oplus \< w\> = F_k$ for a unique positive integer $k \le t$. Now it is clear 
that $F_i \cap F_j \subseteq U \subseteq F_k \cap F_j$. Further, if we show that $k < j$, then $F_k\cap F_j$ would be a proper subspace of $F_k$ and hence $\dim F_k \cap F_j \le r -1$. On the other hand, since $\dim U = r-1$ and 
$U \subseteq F_k \cap F_j$, we see that $\dim  F_k \cap F_j  = r-1$. 

To prove that $k< j$, we consider the tower decompositions  of $U$ and $F_k$, say, 
$$
\T(U) =(U_1,\dots,U_{r-1})  \quad \text{and} \quad   
\T(F_k) = (V^*_1, \dots , V^*_r).
$$
Recall that $W_s = V_s$ for $1\le s < e$. We now claim that $U_s = V_s$   for $1\le s < e$. To see this, let $d$ be the least positive integer such that $U_d \ne V_d$. Suppose, if possible $d < e$. Let $\alpha:= \min  \overline{U}_d$ and $\beta:=  \min \overline{V}_d$. Note that $\alpha \in U \setminus \{\0\} \subseteq F_j \setminus \{\0\}$. Now if $p(\alpha) = p(\beta)$, then from Lemma \ref{lem::4} we see that $\alpha \in \overline{V}_d$. 
Consequently, $V_d = V_{d-1} \oplus \< \alpha \> = U_{d-1} \oplus \< \alpha \> = U_d$, which is a contradiction. 
Also, if $p(\alpha) > p(\beta)$, then  from Lemma \ref{lem::4} we see that $\alpha \in \overline{V}_s$ for some positive integer $s < d$. But then $\alpha \in {V}_s = U_s \subseteq U_{d-1}$, which is a contradiction since 
$\alpha \in \overline{U}_d = U_d \setminus U_{d-1}$.  It follows that $p(\alpha) < p(\beta)$.  Finally, if $p(\alpha) <  p(\beta)$, then  from Lemma \ref{lem::4} we see that $\beta \in \overline{U}_s$ for some positive integer $s < d$. But then $\beta \in {U}_s = V_s \subseteq V_{d-1}$, which is a contradiction since 
$\beta \in \overline{V}_d = V_d \setminus V_{d-1}$. This proves that $d \ge e$ and so the last claim is proved.

Now let $\ell$ be the least positive integer such that $V_\ell \ne V^*_\ell$.  We shall show that $k < j$, or equivalently, $F_k \prec F_j$. by considering separately the following  
two cases. 

\smallskip

{\bf Case 1.} $\ell < e$. 

Let $v^{ }_{\ell}:= \min  \overline{V_{\ell}}$ and $v^*_{\ell}:= \min  \overline{V^*_{\ell}}$. Note that if 
$p(v^*_{\ell}) > p(v^{ }_{\ell})$, then by Lemma~\ref{lem:7}, $v^*_{\ell} \prec v^{ }_{\ell}$, and so $F_k \prec F_j$.
Thus, to complete the proof in this case it suffices to show that $p(v^*_{\ell}) \le p(v^{ }_{\ell})$ leads to a contradiction. 

First suppose $p(v^*_{\ell}) < p(v^{ }_{\ell})$. 
Since $\ell < e \le d$, we find $ v^{ }_{\ell} \in V_{\ell} = U_{\ell} \subseteq F_k$  and $v^{ }_{\ell} \ne 0$. Thus, from Lemma \ref{lem::4} we see that $ v^{ }_{\ell} \in V^*_s$ for some positive integer $s < \ell$. But then $V^*_s = V_s \subseteq V_{\ell -1}$ and so $v^{ }_{\ell} \in V_{\ell -1}$, which is a contradiction. 

Next, suppose $p(v^*_{\ell}) = p(v^{ }_{\ell})$. In this case, if $v^*_{\ell} \in F_j$, then we must have $v^*_{\ell} \in V_{\ell}$, thanks to Lemma \ref{lem::4}. But then $V^*_{\ell} = V^*_{\ell-1} \oplus \< v^*_{\ell} \> = V_{\ell}$, which is a contradiction. Thus $v^*_{\ell} \not \in F_j$. In particular, if 
$y:= v^*_{\ell} -  v^{ }_{\ell}$, then $y\ne 0$. Moreover, 
by part (i) of Lemma~\ref{lem:7_1}, 
the first nonzero entry in  $v^*_{\ell}$ as well as $ v^{ }_{\ell}$ is $1$. Hence $p(y) >p( v^*_{\ell}) =p(  v^{ }_{\ell})$. Also, 
$y\in F_k$, since $v^*_{\ell} \in F_k$ and  $ v^{ }_{\ell} \in V_{\ell} = U_{\ell} \subseteq F_k$. Thus, from Lemma \ref{lem::4}, we see that $y \in V^*_s$ for some positive integer $s < \ell$. But then $y \in V_s$, and so 
$y\in F_j$, which is a contradiction. This completes the proof in Case~1. 

\smallskip

{\bf Case 2.} $\ell \ge e$. 

Here $V^*_s = V_s = W_s$ for $1\le s < e$. Also $w \prec v$, where $w = \min \overline{W}_e$ and $ v = \min \overline{V}_e$. So by Lemma~\ref{lem:7}, $p(v) \le p(w)$. Now pick any $z\in  \overline{V^*_{e}}$ so that 
$V^*_e = V^*_{e-1} \oplus \< z\> = V_{e-1}  \oplus \< z\>$ and, by  Lemma \ref{lem::4}, $p(V^*_e) =\{p(z)\}$. 
Now $w\in F_k \setminus \{\0\}$ and so $w \in V^*_s$ for a unique positive integer $s \le r$. Also since 
$w \in \overline{W}_e$, we see that $w\not\in W_{e-1} = V^*_{e-1}$. Thus $s \ge e$ and therefore, in view of 
Lemma \ref{lem::4}, $p(v) \le p(w) \le p(z)$. Now if $p(v) = p(z)$, then $p(w) = p(z)$, and so $w \in \overline{V^*_e}$. Consequently, 
$$
\min \overline{V^*_e} \preceq w \prec v =  \min \overline{V}_e,
$$
which implies that $F_k \prec F_j$. 
On the other hand, if $p(v) < p(z)$, then by Lemma~\ref{lem:7}, 
$z \prec v$, and hence 
$$
\min \overline{V^*_e} \preceq z \prec v =  \min \overline{V}_e,
$$
which implies once again that $F_k \prec F_j$, as desired. 
\end{proof}

We remark that the shellability of the $q$-sphere $S_q^{n-1}$ is a trivial consequence of Theorem~\ref{thm:shelling}, because $S_q^{n-1}$ is precisely the $q$-matroid complex corresponding to the uniform $q$-matroid $U_q(n-1, n)$.


\section{Homology of $q$-Spheres and Uniform $q$-Complexes}  \label{Sec6}

This section is divided into three subsections. In \S~\ref{ss1} below, we review some preliminaries concerning finite topological spaces and their homotopy. Next, we consider $q$-spheres and explicitly determine their reduced homology groups in \S~\ref{ss2}. These results are then generalized in \S~\ref{newss3} to $q$-complexes associated to arbitrary uniform $q$-matroids. 


\subsection{Topological Preliminaries}\label{ss1}
Finite topological spaces, or in short, finite spaces, are simply topological spaces having only a finite number of points. In case they are $T_1$, the topology is necessarily discrete and not so interesting. Rather surprisingly, finite spaces that are $T_0$ (but not $T_1$) have a rich structure and a close connection with finite posets. The study of finite spaces goes back to Alexandroff \cite{Alex} and it has had important contributions by Stong \cite{Stong} and McCord \cite{McCord}. Good expositions of the theory of finite spaces are given by May \cite{May2012} and Barmak \cite{Bar11}. Still, the theory is not as widely known as it should, and so for the convenience of the reader, we provide here a quick review of the relevant notions and results. 

Let $X$ be a finite $T_0$ space. Then for each $x\in X$, the intersection, say $U_x$,  of all open sets of $X$ containing $x$ is open. Clearly $\{U_x : x\in X\}$ is a basis for (the topology on) $X$. For $x,y\in X$, define
$x\le y$ if $x \in U_y$. Then this defines a partial order on $X$ (since $X$ is $T_0$); moreover $U_y$ becomes the ``basic down-set'' $\{x\in X: x\le y\}$.

On the other hand, suppose $X$ is a finite poset (with the partial order denoted by $\le$). We call a subset $U$ of $X$ a \emph{down-set} (resp. \emph{up-set}) if whenever $y\in U$ and $x\in X$ satisfy $x\le y$ (resp. $y \le x$), we must have $x\in U$. We can define a topology on $X$ by declaring that the open sets in $X$ are precisely the down-sets in $X$ (or equivalently, the closed sets in $X$ are precisely the up-sets in $X$). This is called the \emph{order topology} on $X$, and it makes $X$ a finite $T_0$ space. 

Let $X,Y$ be finite posets, both regarded as finite topological spaces with the order topology. Then it can be shown (cf. \cite[Proposition 1.2.1]{Bar11}) that a function $f:X\to Y$ is continuous if and only if it is order-preserving. Further, if we 
let $Y^X$ denote the set of all continuous functions from $X$ to $Y$, then $Y^X$ is a poset with the pointwise partial order defined (for any $f, g \in Y^X$) by $f\le g$ if $f(x)\le g(x)$ for every $x\in X$. Thus  $Y^X$ can also be regarded as a finite topological space with the order topology. Moreover, $f,g\in Y^X$ are \emph{homotopic} 
(which means, as usual, that there is a continuous map $h:X\times [0,1] \to Y$ such that $h(x,0)=f(x)$ and 
$h(x,1)=g(x)$ for all $x\in X$) if and only if there is a continuous map $\alpha:[0,1]\to Y^X$ such that $\alpha(0) =f$ and $\alpha (1)=g$. We write $f\simeq g$ if $f,g \in Y^X$ are homotopic. Also, $X$ and $Y$ are said to be \emph{homotopy equivalent} if there are 
$f \in Y^X$ and $g \in X^Y$ such that  $f\circ g\simeq \Id_Y$ and $g\circ f\simeq \Id_X$.
Finally, recall that $X$ is said to be \emph{contractible} if it is homotopy equivalent to a point. Note that the homotopy groups as well as the reduced (singular) homology groups of contractible spaces are all trivial. 
Recall also that a topological space is \emph{acyclic} if all of its reduced homology groups are trivial. 
A contractible space is acyclic, but the converse is not true, in general. 

We now recall some known basic results  for which a reference is given. These will be useful to us later. Unless mentioned otherwise, the topology on finite posets is assumed to be the order topology and topological notions such as continuity, contractibility are considered with respect to this topology. 

\begin{pro}[{\cite[Corollary 1.2.6]{Bar11}}] 
\label{pro:fence}
Let $X, Y$  be finite posets 
and let $f,g\in Y^X$. 
Then  $f\simeq g$ if  and only if there is a finite sequence $f_0, f_1, \dots , f_t$ in $Y^X$ such that 
$f=f_0 \le f_1 \ge f_2 \le \cdots f_t = g$. 
\end{pro}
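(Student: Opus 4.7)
The plan is to prove the two implications separately, using the correspondence between homotopies $h \colon X \times [0,1] \to Y$ and paths $\alpha \colon [0,1] \to Y^X$ noted in the preliminaries, together with the fact that the open sets of a finite poset in the order topology are precisely the down-sets.

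For $(\Leftarrow)$, it suffices to show that any two comparable elements of $Y^X$ are homotopic, for then concatenating homotopies along the fence $f = f_0 \le f_1 \ge f_2 \le \cdots f_t = g$ yields $f \simeq g$. Given $p \le q$ in $Y^X$, I would define $\alpha \colon [0,1] \to Y^X$ by $\alpha(t) = p$ for $t \in [0,1)$ and $\alpha(1) = q$. A direct check shows $\alpha$ is continuous: for any down-set $U$ of $Y^X$, the preimage $\alpha^{-1}(U)$ is one of $\emptyset$, $[0,1)$, or $[0,1]$, and the case $q \in U$ with $p \notin U$ cannot arise because $U$ is a down-set and $p \le q$. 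Hence $p \simeq q$, and the implication follows by transitivity of $\simeq$.

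For $(\Rightarrow)$, suppose $f \simeq g$ and fix a corresponding path $\alpha \colon [0,1] \to Y^X$. Let $I \subseteq [0,1]$ be the set of $t$ such that $\alpha(t)$ and $f = \alpha(0)$ are joined by a fence in $Y^X$. I would show that $I$ is nonempty, open, and closed, whence $I = [0,1]$ by connectedness of $[0,1]$, and in particular $1 \in I$, yielding the desired fence from $f$ to $g$. The key observation is that for each $t \in [0,1]$ the preimage $V_t := \alpha^{-1}(U_{\alpha(t)})$ is an open neighborhood of $t$ in $[0,1]$, and every $s \in V_t$ satisfies $\alpha(s) \le \alpha(t)$, so $\alpha(s)$ and $\alpha(t)$ are comparable and therefore joined by a trivial fence of length one. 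Openness of $I$ is immediate, while closedness follows by taking any sequence in $I$ converging to some $t$ and observing that it eventually enters $V_t$, so $\alpha(t)$ is attached to the fence-component of $f$.

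The argument presents no serious obstacle; the only genuine subtlety is bookkeeping the translation between comparability in $Y^X$ (which is defined pointwise in $Y$) and the topological order structure on $Y^X$. Since $Y^X$ is finite and its order topology is described entirely by down-sets, no covering or limiting argument beyond the observation above is required, and the connectedness of $[0,1]$ is the only genuinely topological ingredient used.
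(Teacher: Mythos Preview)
Your proof is correct. Note, however, that the paper does not supply its own proof of this proposition: it is stated as a recalled result with a citation to \cite[Corollary~1.2.6]{Bar11}, so there is nothing in the paper to compare your argument against beyond observing that your proof is self-contained and sound.
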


\begin{pro}[{\cite[Corollary 2.3.4]{May2012}}] 
\label{pro:minimal}
Let $X$ be a finite poset such that $X$ has a unique maximal element or a unique
minimal element. Then X is contractible.
\end{pro}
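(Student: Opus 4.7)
The plan is to deduce this directly from Proposition~\ref{pro:fence} by exhibiting a one-step order comparison between the identity map and a constant map.

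Suppose first that $X$ has a unique maximal element $m$. Let $c_m \colon X \to X$ be the constant map with value $m$, and let $\Id_X$ be the identity. Both are order-preserving (hence continuous) functions $X \to X$. The key observation is that for every $x \in X$ we have $\Id_X(x) = x \le m = c_m(x)$, because $m$ is the unique maximum of $X$. Thus $\Id_X \le c_m$ in the pointwise order on $X^X$, and Proposition~\ref{pro:fence} immediately yields $\Id_X \simeq c_m$ via the length-one fence $\Id_X \le c_m$.

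Next I would promote this to a homotopy equivalence between $X$ and a one-point space. Let $* = \{m\}$, viewed as a finite poset (and hence as a finite $T_0$-space) with its unique possible topology. Let $i \colon * \hookrightarrow X$ be the inclusion and $r \colon X \to *$ be the unique map; both are order-preserving, hence continuous. Then $r \circ i = \Id_*$ on the nose, while $i \circ r = c_m \simeq \Id_X$ by the previous paragraph. This is exactly the definition of a homotopy equivalence between $X$ and $*$, so $X$ is contractible.

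The case in which $X$ has a unique minimal element $\mu$ is handled symmetrically: take $c_\mu \colon X \to X$ and note that $c_\mu(x) = \mu \le x = \Id_X(x)$ for all $x \in X$, so $c_\mu \le \Id_X$ in $X^X$, and again Proposition~\ref{pro:fence} gives $c_\mu \simeq \Id_X$; the same inclusion/retraction argument with $* = \{\mu\}$ then shows $X \simeq *$. I do not foresee a real obstacle here: the only subtle point is remembering that continuity on finite $T_0$-spaces with the order topology is exactly order-preservation (so constant maps and $\Id_X$ are automatically continuous), and that Proposition~\ref{pro:fence} turns any pointwise inequality into a genuine homotopy in one step.
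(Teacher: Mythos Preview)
Your proof is correct. Note, however, that the paper does not actually supply its own proof of this proposition; it simply cites it from \cite[Corollary 2.3.4]{May2012}. That said, your argument is exactly the standard one and is in the same spirit as the proof the paper gives for the closely related Lemma~\ref{pro:::6}, where contractibility is likewise established by producing a short fence (via Proposition~\ref{pro:fence}) between the identity and a constant map.
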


A finer version of Proposition~\ref{pro:minimal} for the posets that are of interest to us in this article is the following.  

\begin{lem}\label{pro:::6}
Let $\Delta$ be a nonempty collection of 
 subspaces of $E=\Fq^n$. 
 Call the elements of $\Delta$ as the faces of $\Delta$  and those faces of $\Delta$  that are maximal with respect to inclusion as the facets of $\Delta$.  Assume that any  finite intersection of facets of $\Delta$ that contains a fixed face of $\Delta$ is necessarily a face of $\Delta$. Suppose there is $A\in \Delta$ such that $A\subseteq F$ for every facet $F$ of $\Delta$. Then $\Delta$ is contractible.
\end{lem}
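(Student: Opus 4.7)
The plan is to view $\Delta$ as a finite poset under inclusion endowed with the order topology, and to show that the identity map $\Id_\Delta$ is homotopic to the constant map at $A$; this immediately exhibits a homotopy equivalence between $\Delta$ and the one-point space $\{A\}$, hence contractibility. The key ingredient will be a single auxiliary continuous self-map $\phi: \Delta \to \Delta$ defined by
\[
\phi(F) := \bigcap \{B : B \text{ is a facet of } \Delta \text{ with } F \subseteq B\}.
\]
Because $\Delta$ is finite and nonempty, the collection of facets containing any given face $F$ is nonempty and finite, so this is exactly a finite intersection of facets all containing the fixed face $F$; the hypothesis of the lemma then guarantees $\phi(F) \in \Delta$. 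By construction $F \subseteq \phi(F)$, and since $A$ lies in every facet of $\Delta$, we also have $A \subseteq \phi(F)$.

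Next I would verify that $\phi$ is order-preserving, hence continuous by the criterion recorded in \S\ref{ss1}. If $F_1 \subseteq F_2$ are faces, every facet containing $F_2$ also contains $F_1$, so $\{B : F_2 \subseteq B\} \subseteq \{B : F_1 \subseteq B\}$; intersecting over a smaller collection of facets yields a larger subspace, and hence $\phi(F_1) \subseteq \phi(F_2)$.

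Finally, let $g: \Delta \to \Delta$ denote the constant map with value $A$, which is trivially continuous since $A \in \Delta$. The relations $\Id_\Delta(F) = F \subseteq \phi(F)$ and $g(F) = A \subseteq \phi(F)$ show $\Id_\Delta \leq \phi$ and $g \leq \phi$ in the pointwise partial order on $\Delta^\Delta$. By Proposition~\ref{pro:fence}, the zigzag $\Id_\Delta \leq \phi \geq g$ gives $\Id_\Delta \simeq g$, and factoring $g$ through the one-point space $\{A\}$ then exhibits $\Delta$ as homotopy equivalent to a point, i.e., contractible. The one genuinely delicate step is the well-definedness of $\phi$ as a map into $\Delta$; this is precisely what the hypothesis on intersections of facets sharing a common face is designed to secure, and without it neither $\phi$ nor the fence argument would go through.
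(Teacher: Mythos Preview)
Your proof is correct and follows essentially the same approach as the paper: your auxiliary map $\phi$ is precisely the paper's map $h(U) := V_U$ (the intersection of all facets containing $U$), and the fence $\Id_\Delta \le \phi \ge g$ that you use to conclude $\Id_\Delta \simeq g$ is exactly the argument the paper gives via Proposition~\ref{pro:fence}. The only cosmetic difference is that the paper explicitly names the maps $f:\Delta\to\{C\}$ and $g:\{C\}\to\Delta$ realizing the homotopy equivalence, whereas you phrase it as ``factoring $g$ through the one-point space,'' but the content is identical.
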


\begin{proof}
Fix any $B\in \Delta$. Consider $f\colon \Delta\rightarrow \{B\}$ and $g\colon \{B\} \rightarrow \Delta$ defined by 
$$
f(U)  := B  \text{ for all } U\in \Delta \quad \text{and} \quad g(B):=A.
$$
Clearly, $f$ and $g$ are continuous and $f\circ g = \Id_{\{B\}}$. 
We will show that $g\circ f\simeq \Id^{ }_\Delta$. To this end, define, for any $U\in \Delta$, the set $V_U$ to be the intersection of all facets of $\Delta$ containing $U$.
Let $h\colon \Delta\rightarrow \Delta$ be defined by  $h(U): = V^{ }_U$  for  $U\in \Delta$. Observe that if  $U_1, U_2\in \Delta$ with $U_1\subseteq U_2$, then  any facet of $\Delta$ containing $U_2$ must  contain $U_1$,  and therefore, $V^{ }_{U_1} \subseteq V^{ }_{U_2}$. Thus $h$ is order-preserving and hence it is continuous.  
By our hypothesis, $A \subseteq V^{ }_U$ for every $U\in \Delta$. Hence $g\circ f \le  h$. Also, since 
$U \subseteq V^{ }_U$ for any $U\in \Delta$, we obtain $\Id^{ }_\Delta\le h$. Thus it follows from 
by Proposition~\ref{pro:fence} that  $\Delta$ is homotopy equivalent to $\{B\}$. This proves that $\Delta$ is contractible.
\end{proof}

\begin{defn}\label{defn:::30}
A subset $\Delta$ of $\SE$  satisfying the hypothesis in Proposition \ref{pro:::6} is called a \emph{cone} with \emph{apex}  $A$.
\end{defn}

\subsection{Homology of $q$-Spheres}\label{ss2}

If $\Delta$ is a $q$-complex on $E=\Fq^n$, then $\Delta$ is a finite topological space with the order topology corresponding to the partial order given by inclusion. As a topological space, it is contractible because it has a unique minimal element, namely, the zero space $\{\mathbf{0}\}$ and so Proposition~\ref{pro:minimal} applies. Thus, the homology (as well as homotopy) groups of $\Delta$ are trivial. With this in view, and as in the classical case, we will replace 
$\Delta$ by the punctured $q$-complex
$$
\ring{\Delta}:= {\Delta} \setminus\left\{ \{\mathbf{0}\} \right\}
$$
obtained by removing the zero subspace from $\Delta$. Thus, when we speak of the homology of $\Delta$, we shall in fact mean the homology of $\ring{\Delta}$. In this subsection, we will outline how the (reduced) homology of $q$-spheres can be computed explicitly. 

Recall that the $q$-sphere ${S}_q^{n-1}$ is the $q$-complex formed by all the subspaces of $E=\Fq^n$ other than $E$ itself. So the punctured $q$-sphere $\ring{S}_q^{n-1}$ consists of all the subspaces of $E$
other than $E$ and $\{\mathbf{0}\}$. It is equipped with the order topology w.r.t. inclusion. 
In particular, $\ring{S}_q^{n-1}$ is the empty set if $n=1$. When $n=2$,  the punctured $q$-sphere $\ring{S}_q^{n-1}$ consists of  $q+1$ distinct one-dimensional subspaces of $\Fq^2$, which form connected components  with respect to the order topology. Thus the homology is rather easy to determine if $n=1$ or $n=2$. But the poset structure and the homology becomes a little more difficult to determine when $n\ge 3$. For example, 
the poset structure of the punctured $q$-sphere $\ring{S}_q^{2}$ when $q=2$ is depicted by (the solid lines in) Figure~\ref{fig:S2}, where we have let $x,y,z$ denote linearly independent elements of $\F_2^3$. It is seen here that  unlike in the case $n=2$, the $q$-sphere is a connected space when $n=3$. 

\begin{figure}\label{fig:S2}
\begin{center}
\hspace*{-.3in}
\begin{small}
\begin{tikzpicture}
  \node (max) at (0,2) {$\<x,y,z\>$};
  \node (xy) at (-6,0) {$\<x,y\>$};
  \node (xz) at (-4,0) {$\<x,z\>$};
  \node (yz) at (-2,0) {$\<y,z\>$};
  \node (xyz) at (0,0) {$\<x,y+z\>$};
  \node (yxz) at (2,0) {$\<y,x+z\>$};
  \node (xyxz) at (4.5,0) {$\<x+y,x+z\>$};
  \node (zxy) at (7,0) {$\<z,x+y\>$};
  \node (x) at (-6,-2) {$\<x\>$};
  \node (y) at (-4,-2) {$\<y\>$};
  \node (z) at (-2,-2) {$\<z\>$};
  \node (XY) at (0,-2) {$\<x+y\>$};
  \node (XZ) at (2,-2) {$\<x+z\>$};
  \node (YZ) at (4.5,-2) {$\<y+z\>$};
  \node (XYZ) at (7,-2) {$\<x+y+z\>$};
  \node (min) at (0,-4) {$\<0,0,0\>$};
  \draw[dash dot] (min) -- (x);
  \draw[dash dot] (min) -- (y);
  \draw[dash dot] (min) -- (z);
  \draw[dash dot] (min) -- (XY);
  \draw[dash dot] (min) -- (XZ);
  \draw[dash dot] (min) -- (YZ);
  \draw[dash dot] (min) -- (XYZ);
  \draw[dash dot] (max) -- (xy);
  \draw[dash dot] (max) -- (yz);
  \draw[dash dot] (max) -- (xz);
  \draw[dash dot] (max) -- (xyz);
  \draw[dash dot] (max) -- (yxz);
  \draw[dash dot] (max) -- (zxy);
  \draw[dash dot] (max) -- (xyxz); 
  \draw (xy) -- (x) -- (xz) -- (z) -- (yz) -- (y) -- (yxz) -- (XZ) -- (xyxz) -- (XY) -- (zxy) -- (XYZ) -- (xyz) -- (YZ)--(yz); 
  \draw (x) -- (xyz);
  \draw (y) -- (xy) -- (XY);
  \draw (z) -- (zxy);
  \draw (xz) -- (XZ); 
  \draw  (yxz) -- (XYZ);
  \draw   (xyxz)-- (YZ);
\end{tikzpicture}
\end{small}
\end{center}
\caption{Illustration of the punctured $q$-sphere $\ring{S}_q^{2}$ when $q=2$}
\end{figure}
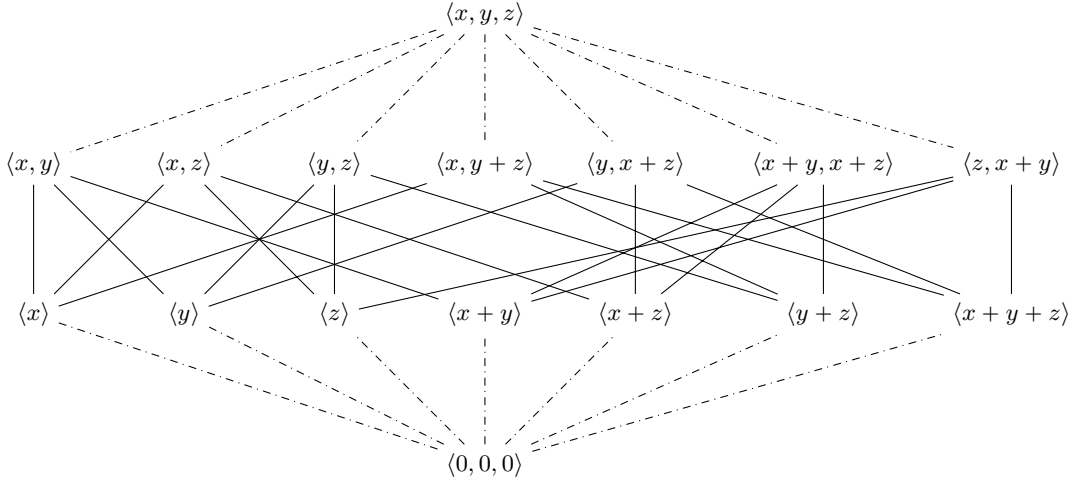

The key to determine the homology of $q$-spheres is the following lemma. Here, and hereafter, for an $\Fq$-vector space $F$, we denote by  $  \ring{\Sigma} (F)$ the set of all nonzero subspaces of $F$. 

\begin{lem} \label{lem:5.5}
Assume that $n \geq 2$. Then there exists a shelling $F_1,\ldots,F_t$ of the $q$-sphere $S_q^{n-1}$ and a positive integer $\ell \le t$ 
such that if for $1\le i \le t$, we let $\Delta_{i}:= \langle F_1,\ldots,F_{i} \rangle$,
then the punctured $q$-complex $\ring{\Delta}_{\ell}$ is contractible and moreover, 
\begin{equation}\label{DFi}
\ring{\Delta}_{i} \cap \ring{\Sigma} (F_{i+1})  =  \ring{\Sigma} (F_{i+1}) \setminus \{F_{i+1}\}
\quad \text{for } \ell \le i < t, 
\end{equation}
that is, $\ring{\Delta}_{i}\cap \ring{\Sigma} (F_{i+1}) $ is the punctured $q$-sphere $\ring{S}_{q}^{n-2}$ for each $i= \ell, \dots, t-1$. 
\end{lem}

\begin{proof}
We have seen in Example~\ref{exa:qS} that any ordering of the facets of $S_q^{n-1}$ gives a shelling of $S_q^{n-1}$. To obtain a   shelling with the additional two properties asserted in the lemma, 
we proceed as follows. 
Fix an arbitrary nonzero vector $\aa$ in $\Fq^{n}$. Suppose $F_1,\dots,F^{ }_{\ell}$  are all the facets of $S_q^{n-1}$ 
containing $\aa$. In other words $\{F_1,\dots,F^{ }_{\ell}\}$ is the set of all $(n-1)$-dimensional subspaces of $\Fq^n$,  
which contain $\aa$. Also, let $F^{ }_{\ell+1}, \dots , F^{ }_t$ denote all the facets of $S_q^{n-1}$, which do not contain $\aa$. Write $\Delta_{i}:= \langle F_1,\ldots,F_{i} \rangle$ for $1\le i \le t$. Then $\<\aa\>$ is contained in every facet  of $\ring{\Delta}_{\ell}$, and hence by Lemma~\ref{pro:::6},  $\ring{\Delta}_{\ell}$ is contractible. 

To prove that $F_1, \dots F^{ }_t$ also satisfies \eqref{DFi}, first suppose $n=2$. 
Then it is clear that $\ell =1 $ and $\ring{\Delta}_\ell = \{\<\aa\>\}$. Also, $ \ring{\Sigma} (F_{i+1})= \{F_{i+1} \}$ 
for $1\le i < t$. 
 Thus, we readily see 
 that the two sets on either sides of the equality in \eqref{DFi} are both empty. Now suppose $n\ge 3$. Fix $i\in \NN$ such that $ \ell \le i < t$. Since $F_{i+1}\not\in \Delta_i$, it is clear that 
$\ring{\Delta}_{i} \cap \ring{\Sigma} (F_{i+1}) \subseteq \ring{\Sigma} (F_{i+1}) \setminus \{F_{i+1}\} $. To prove the other inclusion, it suffices to show that every facet of $\Sigma (F_{i+1}) \setminus \{F_{i+1}\}$ is in $\Delta_i$. Let $G$ be a facet of $\Sigma (F_{i+1}) \setminus \{F_{i+1}\}$. Since $i\ge \ell$, we see that $\aa\not\in G$. Hence 
$G\oplus \<\aa\>$ is a facet of $S_q^{n-1}$ containing $\aa$, and therefore, $G\oplus \<\aa\> =F_k$ for some positive integer $k\le \ell$. In particular, $G\subseteq F_k$ and so $G\in \Delta_k \subseteq \Delta_i$. 
\end{proof}

\begin{rem}\label{rem:tl}
It is possible to describe the positive integers $t$ and $\ell$ in Lemma~\ref{lem:5.5} explicitly. Indeed, $t$ is the number of subspaces of $\Fq^n$ of dimension $n-1$. Also, the proof of Lemma~\ref{lem:5.5} shows that we can take $\ell$ to be the number of subspaces of $\Fq^n$ of dimension $n-1$ containing a fixed nonzero vector $\aa$. 
Consequently, both $t$ and $\ell$ can be described in terms of Gaussian binomial coefficients as follows. 
\[
t = {{n}\brack{n-1}}_q = \frac{q^n-1}{q-1} \quad \text{and} \quad \ell = {{n-1}\brack{n-2}}_q = \frac{q^{n-1}-1}{q-1}.
\]
Observe that $t - \ell = q^{n-1}$. 
\end{rem}

Let us recall that as per standard conventions in topology, if $X$ is the empty set, then its reduced homology group $\widetilde{H}_p(X)$ is $\ZZ$ if $p=-1$ and $0$ otherwise\footnote{Indeed, a $p$-simplex is the convex hull of $p+1$ points. So if $p=-1$, then this is the empty set, while the singular $p$-simplex in $X$ consists precisely of the empty function, and the free abelian group $C_p(X)$ generated by it is $\ZZ$. On the other hand, all other chain complexes are $0$.}. In general, the homology groups of (punctured) $q$-spheres are given by the following.

\begin{thm} \label{thm:5.7}
Let $c_n:= q^{n(n-1)/2}$. Then the reduced homology groups of the punctured $q$-sphere $\ring{S}_q^{n-1}$ are given by 
 \[
\widetilde{H_p}(\ring{S}_q^{n-1}) = \begin{cases}
\ZZ\strut^{c_n}  & \text{if } p=n-2,\\
0 & \text{otherwise } .
\end{cases}
\] 
\end{thm}
\begin{proof}
We use induction on $n$. If $n= 1$, then the desired result follows from the standard conventions about the reduced homology of the empty set.  

Now suppose $n \ge 2$ and the result holds for values of $n$ smaller than the given one. Let $F_1,\ldots,F_t$ be a shelling of ${S}_q^{n-1}$ as in Lemma~\ref{lem:5.5}, and let $\ell$ be the positive integer as in Lemma~\ref{lem:5.5} and Remark~\ref{rem:tl}. Also let $\Delta_i$, for $1\le i \le t$, be as in Lemma~\ref{lem:5.5}. In the first step, we take
$$
X_1:= \ring{\Delta}_{\ell} \quad \text{and} \quad X_2:= \ring{\Sigma}(F_{\ell+1}).
$$
Note that both $X_1$ and $X_2$ are down-sets, and thus they are open subsets of $\ring{S}_q^{n-1}$. Moreover, 
$X_1\cup X_2 =  \ring{\Delta}_{\ell +1}$, and by  Lemma~\ref{lem:5.5}, $X_1\cap X_2$ can be identified with the 
punctured $q$-sphere $\ring{S}_q^{n-2}$. Let us apply the Mayer-Vietoris sequence for reduced homology:
$$
\widetilde{H_p}(X_1)\oplus \widetilde{H_p}(X_2)\longrightarrow\widetilde{H_p}(X_1 \cup X_2) \longrightarrow \widetilde{H}_{p-1}(X_1 \cap X_2)\longrightarrow \widetilde{H}_{p-1}(X_1)\oplus \widetilde{H}_{p-1}(X_2)
$$
and observe that by  Lemma~\ref{lem:5.5},  $X_1$ is contractible, and since $X_2$ has a unique maximal element (viz., $F_{\ell+1}$), by Proposition~\ref{pro:minimal} , $X_2$ is also contractible. Thus both the direct sums in the above exact sequence are $0$, and we obtain
$$
  \widetilde{H_p}(\ring{\Delta}_{\ell +1} ) = \widetilde{H_p}(X_1 \cup X_2) \cong \widetilde{H}_{p-1}(X_1 \cap X_2) = \widetilde{H}_{p-1}(\ring{S}_q^{n-2}).  
$$
So by the induction hypothesis, $\widetilde{H_p}(\ring{\Delta}_{\ell +1} )$ is equal to $\ZZ\strut^{c_{n-1}}$ if $p-1 = n-3$, i.e., $p=n-2$, and $0$ otherwise. In the next step, we take 
$$
X_1:= \ring{\Delta}_{\ell +1} \quad \text{and} \quad X_2:= \ring{\Sigma}(F_{\ell+2}),
$$
and note that $X_1, X_2$ are open subsets of $\ring{S}_q^{n-1}$ such that $X_1\cup X_2 =  \ring{\Delta}_{\ell +2}$, and by  Lemma~\ref{lem:5.5}, $X_1\cap X_2$ can be identified with the 
punctured $q$-sphere $\ring{S}_q^{n-2}$. Let us apply (a slightly longer)  Mayer-Vietoris sequence for reduced homology:
$$
\minCDarrowwidth12pt
\begin{CD}
\widetilde{H_p}(X_1 \cap X_2) @>>> \! \widetilde{H_p}(X_1)\oplus \widetilde{H_p}(X_2) @>>>\!  \! \! \! \widetilde{H_p}(X_1 \cup X_2) \\
@. @. @VVV  \\\ 
@. @.  \!  \!  \! \! \widetilde{H}_{p-1}(X_1 \cap X_2) @>>> \widetilde{H}_{p-1}(X_1)\oplus \widetilde{H}_{p-1}(X_2)
\end{CD}
$$
This time $X_2$ is contractible, whereas the homology of $X_1$ is determined in the previous step, while that of 
$X_1\cap X_2$ is known, as before, by the induction hypothesis. Using this for $p=n-2$, we obtain
$$
0 \longrightarrow  \ZZ\strut^{c_{n-1}}  \longrightarrow
  \widetilde{H}_{n-2}(\ring{\Delta}_{l+2}) \longrightarrow  \ZZ\strut^{c_{n-1}} \longrightarrow 0.  
$$
The short exact sequence above splits (since $\ZZ\strut^{c_{n-1}}$ is a projective $\ZZ$-module, being free), and therefore  $ \widetilde{H}_{n-2}(\ring{\Delta}_{l+2}) = \ZZ\strut^{c_{n-1}} \oplus \ZZ\strut^{c_{n-1}} $. Moreover, 
$\widetilde{H}_{p}(\ring{\Delta}_{l+2}) = 0$ if $p\ne (n-2)$. 
Now if $\ell+2 < t$, we can proceed as before,  and we shall obtain that $\widetilde{H}_{p}(\ring{\Delta}_{l+3})$ is 
$ \ZZ\strut^{c_{n-1}} \oplus \ZZ\strut^{c_{n-1}}   \oplus \ZZ\strut^{c_{n-1}} $ if $p = n-2$, and $0$ otherwise. Continuing in this way, we see that $\widetilde{H}_{p}(\ring{\Delta}_{t})$ is the direct sum of $(t-\ell)$ copies of 
$\ZZ\strut^{c_{n-1}} $ if $p = n-2$, and $0$ otherwise. Now $\Delta_t = S_q^{n-1}$ and in view of Remark~\ref{rem:tl}, 
$$
(t - \ell) c_{n-1} = q^{n-1} q^{(n-1)(n-2)/2} = q^{n(n-1)/2} = q^{c_n}. 
$$
This yields the desired result. 
\end{proof}

It may be noted that Lemma \ref{lem:5.5} plays a crucial role in determining the homology of $q$-spheres. Indeed  
Theorem \ref{thm:5.7} can be readily extended to shellable $q$-complexes satisfying the hypothesis of Lemma \ref{lem:5.5}, and moreover the hypothesis in Lemma \ref{lem:5.5} about contractibility can be replaced by the slightly weaker hypothesis of acyclicity. We record this below. 

\begin{thm}\label{thm:12}
Let $\Delta$ be a pure $q$-complex  on $E=\Fq^n$ of positive dimension $d$. 
Assume that $F_1, \ldots, F_t$ is a shelling on $\Delta$ and there is 
$\ell \in \Np$ with $ \ell \leq t$ such that
 if for $1\le i \le t$, we let $\Delta_{i}:= \langle F_1,\ldots,F_{i} \rangle$,
then the punctured $q$-complex $\ring{\Delta}_{\ell}$ is acyclic and moreover, 
$\ring{\Delta}_{i} \cap \ring{\Sigma}(F_{i+1})$   is the punctured $q$-sphere $\ring{S}_q^{d-1}$ for $\ell  \le  i < t$. Then 
\[
\widetilde{H_p}(\ring{\Delta}) = \begin{cases}
\ZZ\strut^{(t- \ell) q^{d(d-1)/2}}
& \text{if } p=d-1,\\
0 & \text{otherwise.}
\end{cases}
\]  
\end{thm}

\begin{proof}
Follows using  similar arguments as in Theorem \ref{thm:5.7}.
\end{proof}

\subsection{Homology of Uniform $q$-Complexes}\label{newss3}

We shall now outline how the results of the previous subsection can be extended to the following more general class of $q$-complexes associated to arbitrary uniform $q$-matroids. 

\begin{defn}
Let $k$ be a nonnegative integer such that $k \leq n$. The \emph{uniform $q$-complex} of dimension $k$ is the 
$q$-complex  $\Delta_q(k,n)$  on $E=\Fq^n$  given by  
\[\Delta_q(k,n) := \{A \in \SE~:~ \dim A \leq k\}.
\]
\end{defn}
Note that $\Delta_q(k,n)$ is a pure $q$-complex  and its dimension is indeed $k$. Moreover, $\Delta_q(k,n)$ is precisely the $q$-matroid complex corresponding to the uniform $q$-matroid $U_q(k, n)$, and so it follows from 
Theorem~\ref{thm:shelling} that it is shellable. We shall now show that it admits a nice shelling, just as in the case of $q$-spheres.

\begin{lem}\label{lem:5.10}
Let $k$ be a positive integer such that $k \leq n$, and let $\Delta_q(k,n)$ be the uniform $q$-complex of dimension $k$. Then there exists a shelling $F_1, \ldots, F_t$ of $\Delta_q(k,n)$ and an integer $\ell$ with $1 \leq \ell \leq t$ such that  if for $1\le i \le t$, we let $\Delta_{i}:= \langle F_1,\ldots,F_{i} \rangle$, then 
 $\ring{\Delta}_{\ell}$ is contractible and 
$\ring{\Delta}_{i} \cap \ring{\Sigma}(F_{i+1})$   is the punctured $q$-sphere $\ring{S}_q^{k-1}$ for each $i= \ell, \dots, t-1$.  Moreover,  
$t = {{n}\brack{k}}_q$ and $\ell =  {{n-1}\brack{k-1}}_q$. 
\end{lem}

\begin{proof}
The facets of $\Delta_q(k,n)$ are precisely  the $k$-dimensional subspaces of $E=\Fq^n$. 
Consider the total order $\preccurlyeq$ on  $\GkE$ obtained using a total order $\prec$ on $E$ and tower decompositions as in 
Definition \ref{defn:13}. This induces a total order on the facets of $\Delta_q(k,n)$, which, by Theorem \ref{thm:shelling}, gives a shelling $F_1, \ldots, F_t$ of $\Delta_q(k,n)$, where 
$$
t = \text{ the number of $k$-dimensional subspaces of $E$} =  {{n}\brack{k}}_q.
$$

Let $\aa$ be the least nonzero element of $E$ with respect to the total order $\prec$. Note that if  $U, V$ are any two facets such that $\aa \in U$ and $\aa \notin V$, then in view of part (ii) of Lemma~\ref{lem:7_1}, we see that 
$\aa = \min \overline{U}_1 \prec \min \overline{V}_1$, and hence 
from Definition \ref{defn:13}, it follows that $U \prec V$. Now let 
$$
\ell = \text{ the number of $k$-dimensional subspaces of $E$ containing $\aa$} =  {{n-1}\brack{k-1}}_q
$$
so that the first $\ell$ facets $F_1, \ldots, F_\ell$ contain $\aa$, whereas the last $t-\ell$ facets $F_{\ell+1}, \dots , F_t$ do not contain $\aa$. Now $\ring{\Delta}_\ell $ is a cone with apex $\aa$, and hence by  Lemma \ref{pro:::6}, it is contractible. 

Next, let us 
fix an integer $i$ such that $\ell \le  i < t$. Since $F_{i+1}\not\in \Delta_i$, we see that 
 $\ring{\Sigma}(F_{i+1}) \cap \ring{\Delta}_{i}\subseteq \ring{\Sigma}(F_{i+1})\backslash \{F_{i+1}\}$. To prove the reverse inclusion,  it suffices to show that any subspace of $F_{i+1}$ of dimension $k-1$ is in $\Delta_{i}$. Let $G$ be a  subspace of $F_{i+1}$ with $\dim G = k-1$. Then  $\aa\notin G$, since $i > \ell$, 
and so 
$G\oplus \<\aa\> = F_j$ for some positive integer $j \le \ell$. In particular, $j \le i$ and  
$G\oplus \<\aa\> \in \Delta_i$.  
This implies that 
$G\in \Delta_{i}$. 
\end{proof}

We can now generalize  Theorem \ref{thm:5.7} from $q$-spheres to uniform $q$-complexes. 

\begin{thm}\label{thm:uniform}
Let $k \in \NN$ 
with $k \le n$, and let $c(n,k):= q^{k(k+1)/2} {{n-1} \brack {k}}_q$. 
 Then the reduced homology of  the uniform $q$-complex $\Delta_q(k,n)$ is given by 
\[
\widetilde{H_p}(\ring{\Delta}_q(k,n)) = \begin{cases}
\ZZ\strut^{c(n,k)} & \text{if } p=k-1,\\
0 & \text{otherwise}.
\end{cases}
\]  
\end{thm}

\begin{proof}
If $k=0$, then $c(n,k)=1$, while $\ring{\Delta}_q(k,n)$ is the empty set, and the result follows from standard conventions in topology. If $k$ is a positive integer $\le n$, then the result follows from Lemma~\ref{lem:5.10} and Theorem~\ref{thm:12} by noting that 
$$
t - \ell = {{n}\brack{k}}_q - {{n-1}\brack{k-1}}_q = q^k {{n-1}\brack{k}}_q \quad \text{and so}\quad 
(t - \ell) q^{k(k-1)/2} = c(n,k),
$$
where $t$ and $\ell$ are as in Lemma~\ref{lem:5.10}. 
\end{proof}

It may be remarked that Theorem~\ref{thm:uniform} is a trivial consequence of Proposition~\ref{pro:minimal}
when $k=n$, because in this case $\ring{\Delta}_q(k,n)$ 
contains a unique maximal element (viz.,  $E = \Fq^n$), while $c(n,k)=0$. 

\section{Homology of Shellable $q$-Complexes}\label{sec7}
We shall now attempt to determine the homology of a shellable $q$-complex. We proceed in a manner analogous to the classical case of simplicial complexes. But as we shall see, there are some difficulties in obtaining results
analogous to those in the classical case. 

\subsection{Intervals in Shellable $q$-Complexes}\label{ss6.1} 
In the classical case, the notion of \emph{restriction} $\R(F)$ of a facet $F$ plays an important role in the determination of the homology of a shellable simplicial complex; see, e.g., \cite[\S~7.2]{AB1}. But in the case of $q$-complexes, a straightforward analogue is not possible because the complement of an element (or even of a one-dimensional subspace) in an $\Fq$-linear subspace needs not be a subspace. Nonetheless, it turns out that we have a useful analogue if we consider a plethora of restrictions of a facet $F_j$ as defined below. The sets $I_j$ in this definition  provide an analogue of the intervals $[\R(F_j), F_j]$ in the classical case.


\begin{defn}\label{def:Rij}
Let $F_1,\dots,F_t$ be a shelling of a  shellable $q$-complex $\Delta$ on $E=\Fq^n$. 
For $1\le i < j \leq t$, 
the \emph{$i$th restriction} of $F_j$ is defined to be the set 
\[
\R_i(F_j) := \{ x\in F_j :  \langle x \rangle \oplus (F_i \cap F_j) = F_j \}.
\]
Further, for $1 \le j \le t$, we define
$$
I_j := \{ A \in \langle F_j \rangle \colon A\cap \R_{i}(F_j) \neq \emptyset \text{ whenever } 1\le i < j  \text{ and } \R_{i}(F_j)\neq \emptyset\}. 
$$
\end{defn}

\begin{rem}\label{rem:6.2}
If $i, j$ and $F_1, \dots, F_t$ are as in Definition~\ref{def:Rij} and if $F_i \cap F_j$ is not a hyperplane in $F_j$, i.e., if $\dim (F_i \cap F_j) < \dim F_j - 1$, then clearly $\R_{i}(F_j) = \emptyset$. On the other hand, 
 for each $j=2, \dots , t$, we can use Lemma~\ref{lem:ShellChar} to obtain $k\in \Np$ with $k < j$ such that $\R_k(F_j) \ne \emptyset$, and therefore, $I_j$ is nonempty. Note also that the defining condition for $I_j$ is vacuously true if $j=1$, and thus $I_1 = \langle F_1 \rangle$. In general, $\{F_j\} \subseteq I_j \subseteq \langle F_j \rangle$ for each $j=1, \dots , t$. 
\end{rem}

In the classical case, the interval $[\R(F_j), F_j]$ equals $\{F_j\}$ if and only if $\R(F_j) = F_j$. The following lemma is a partial  analogue of this in the case of $q$-complexes. 

\begin{lem}\label{lem:5.13}
Let $F_1,\dots,F_t$ be a shelling of a  shellable $q$-complex $\Delta$ on $E=\Fq^n$. If $j\in \{2, \dots , t\}$ is such that  $I_j = \{F_j \}$, then 
$$
\bigcup_{i=1}^{j-1} \R_{i}(F_j) = F_j \backslash \{\mathbf{0}\}.
$$
\end{lem}

\begin{proof}
Let $j\in \{2, \dots , t\}$ satisfy 
$I_j = \{F_j \}$. The inclusion $\cup_{i=1}^{j-1} \R_{i}(F_j) \subseteq F_j \backslash \{ \mathbf{0} \}$ is obvious. To prove the reverse inclusion, let $x\in F_j \backslash \{ \mathbf{0} \}$. Also let $A$ be a codimension $1$ subspace of $F_j$ such that $ \langle x \rangle \oplus A = F_j$. 
Since $j\ge 2$, 
 in view of Remark~\ref{rem:6.2}, there is $i\in \Np$ with $i< j$ such that $R_{i}(F_j) \ne \emptyset$. In particular, $\dim F_i \cap F_j = \dim F_j -1$. Now since 
  $I_j = \{F_j \}$, we see that $A \notin I_j$, and therefore $A \cap R_{i}(F_j) = \emptyset$. This implies that 
  $A \subseteq F_i \cap F_j$ and since $A$ has codimension $1$, we obtain $A = F_i \cap F_j$. Consequently, 
  $x\in R_{i}(F_j)$. 
%
%
%
\end{proof} 

Unlike in the classical case, the converse of Lemma \ref{lem:5.13} is not true, and this is shown by the following example\footnote{Some of the computations in this example are done using SageMath, and the code is available
upon request.}.

\begin{exa}\label{exa:6.4}
Consider the field extension $\mathbb{F}_{2^4}/\mathbb{F}_{2}$ of degree $4$, and let $a$ be a root in $\mathbb{F}_{2^4}$ of the irreducible polynomial $X^4 + X+ 1$ in $\mathbb{F}_{2}[X]$ so that $\mathbb{F}_{2^4} = \mathbb{F}_{2}(a)$. 
Let $C$ be the rank metric code of length 4 over the extension $\mathbb{F}_{2^4}$ of $\mathbb{F}_{2}$ such that a generator matrix of $C$ is given by 
\[ 
G:= 
\begin{pmatrix}
a^{2} + a + 1 \ & a^{2} & a^{3} + a + 1 \ & a^{3} + a^{2} + a + 1 \\
a^{2} + a + 1 & a^{3} + 1 \ & a & a + 1 \\
a^{2} + 1 & 1 & a^{2} + 1 & a^{3} + 1
\end{pmatrix}.
\]
Let $\Delta_C$ be the $q$-matroid complex on $\mathbb{F}_{2}^4$ associated to $C$ as in Example~\ref{exa:rankmetricmatroids}. Then 
$\dim \Delta_C = \mathop{\rm rank}(G)=3$. There are ${{4}\brack{3}}_2 = 15$ subspaces of $\mathbb{F}_{2}^4$ of dimension $3$ and it turns out that 14 among these are in $ \Delta_C$. In the shelling order of Definition \ref{defn:13}, these 14 facets of $ \Delta_C$, say $F_1, \dots , F_{14}$, can be explicitly listed as follows. 
$$
\begin{array}{l}
\langle \mathbf{e}_2, \, \mathbf{e}_3, \, \mathbf{e}_4 \rangle,  \ \,
\langle \mathbf{e}_1 + \mathbf{e}_2, \, \mathbf{e}_3, \,  \mathbf{e}_4 \rangle, \ \, 
\langle \mathbf{e}_1, \,  \mathbf{e}_2, \, \mathbf{e}_4 \rangle, \ \, 
\langle \mathbf{e}_1 + \mathbf{e}_3, \,  \mathbf{e}_2, \,  \mathbf{e}_4 \rangle, \ \, 
\langle \mathbf{e}_1,  \, \mathbf{e}_2 + \mathbf{e}_3, \, \mathbf{e}_4 \rangle, \\
\langle \mathbf{e}_1 + \mathbf{e}_3,  \, \mathbf{e}_2 + \mathbf{e}_3, \, \mathbf{e}_4 \rangle, \ \, 
\langle \mathbf{e}_1, \, \mathbf{e}_2, \, \mathbf{e}_3 \rangle,  \ \, 
\langle \mathbf{e}_1 + \mathbf{e}_4, \, \mathbf{e}_2, \,  \mathbf{e}_3 \rangle, \ \, 
\langle \mathbf{e}_1, \,  \mathbf{e}_2 + \mathbf{e}_4 , \, \mathbf{e}_3 \rangle, \\
\langle \mathbf{e}_1 + \mathbf{e}_4,  \,  \mathbf{e}_2 +  \mathbf{e}_4, \,  \mathbf{e}_3 \rangle, \ \, 
\langle \mathbf{e}_1 ,  \, \mathbf{e}_2 , \, \mathbf{e}_3 + \mathbf{e}_4 \rangle, \  \, 
\langle \mathbf{e}_1 + \mathbf{e}_4, \, \mathbf{e}_2, \, \mathbf{e}_3 + \mathbf{e}_4 \rangle, \\
\langle \mathbf{e}_1 , \,  \mathbf{e}_2 +  \mathbf{e}_3, \,    \mathbf{e}_3  + \mathbf{e}_4 \rangle, \ \, 
\langle \mathbf{e}_1  + \mathbf{e}_4,  \, \mathbf{e}_2 +  \mathbf{e}_4 , \,    \mathbf{e}_3  + \mathbf{e}_4 \rangle, 
\end{array}
$$
where for $1\le i \le 4$, by $\mathbf{e}_i$ we have denoted the element of $\mathbb{F}_{2}^4$ with $1$ in the $i$th position and $0$ elsewhere. We can take 
a generator matrix of $F_j$ to be 
the $3\times 4$ matrix $Y_j$, which has as its rows the elements of the given ordered basis of $F_j$, and it can be checked that the rank of  the $3 \times 3$ matrix $GY_j^T$ is indeed $3$ for each $j=1, \dots , 14$. Incidentally, 
 the only $3$-dimensional subspace of $\mathbb{F}_{2}^4$ missing in the above list is $F:=\langle \mathbf{e}_1, \, \mathbf{e}_3, \, \mathbf{e}_4 \rangle$ and 
 its generator matrix $Y$ has the property that 
 rank$(GY^T)=2$; indeed, 
$$
Y = \begin{pmatrix}
1 & 0 & 0 & 0 \\
0 & 0 & 1 & 0 \\
0 & 0 & 0 & 1
\end{pmatrix}
\quad
\text{and} \quad
(GY^T)
\begin{pmatrix}
1 \\  a^{3} + a^{2} + a + 1 \\  a^{2} + a 
\end{pmatrix}
= 
\begin{pmatrix}
0 \\ 0 \\ 0
\end{pmatrix}.
$$
Now let us consider the subspace $F_8 = \langle \mathbf{e}_1 + \mathbf{e}_4, \, \mathbf{e}_2, \,  \mathbf{e}_3 \rangle$ and its restrictions $ \R_{i}(F_8)$ for $1\le i < 8$. Observe that $F_1\cap F_8 = \langle \mathbf{e}_2, \,  \mathbf{e}_3 \rangle$ and hence by Definition~\ref{def:Rij}, 
$$
 \R_{1}(F_8) =\{    \mathbf{e}_1 + \mathbf{e}_4,  \; \mathbf{e}_1 + \mathbf{e}_3 + \mathbf{e}_4, \;  \mathbf{e}_1 + \mathbf{e}_2 + \mathbf{e}_4,  \; \mathbf{e}_1 + \mathbf{e}_2 + \mathbf{e}_3 + \mathbf{e}_4 \}
 $$
 Similarly, $F_2\cap F_8 = \langle \mathbf{e}_3, \,  \mathbf{e}_1 + \mathbf{e}_2 + \mathbf{e}_4\rangle$ and 
 $F_3\cap F_8 = \langle \mathbf{e}_1 + \mathbf{e}_4, \,  \mathbf{e}_2  \rangle$, and
 hence
\begin{eqnarray*}
 \R_{2}(F_8) & = & \{    \mathbf{e}_1 + \mathbf{e}_4,  \; \mathbf{e}_2 , \;  \mathbf{e}_1 + \mathbf{e}_3 + \mathbf{e}_4,  \;  \mathbf{e}_2 + \mathbf{e}_3 \} \text{ and } \\
  \R_{3}(F_8) &= & \{    \mathbf{e}_3 ,  \; \mathbf{e}_1 + \mathbf{e}_3 + \mathbf{e}_4, \;   \mathbf{e}_2 + \mathbf{e}_3,  \; \mathbf{e}_1 + \mathbf{e}_2 + \mathbf{e}_3 + \mathbf{e}_4 \}
\end{eqnarray*}
We see already that 
$$
\bigcup_{i=1}^{7} \R_{i}(F_8) = F_8 \backslash \{\mathbf{0}\}.
$$
We can also compute the remaining restrictions and these turn out to be as follows. 
\begin{eqnarray*}
  \R_{4}(F_8) &= & \{    \mathbf{e}_3 ,  \; \mathbf{e}_1 +  \mathbf{e}_4, \;   \mathbf{e}_2 + \mathbf{e}_3,  \; \mathbf{e}_1 + \mathbf{e}_2 + \mathbf{e}_4 \}, \\
   \R_{5}(F_8) & = & \{    \mathbf{e}_2 , \; \mathbf{e}_3 , \;  \mathbf{e}_1 + \mathbf{e}_2 + \mathbf{e}_4,  \;  \mathbf{e}_1 + \mathbf{e}_3 + \mathbf{e}_4 \}, \\
      \R_{6}(F_8) & = & \{    \mathbf{e}_2 , \; \mathbf{e}_3 , \;  \mathbf{e}_1 +  \mathbf{e}_4,  \;  \mathbf{e}_1 + \mathbf{e}_2 + \mathbf{e}_3 + \mathbf{e}_4 \},  \text{ and }  \\
         \R_{7}(F_8) & = & \{   \mathbf{e}_1 +  \mathbf{e}_4,  \;  \mathbf{e}_1 +  \mathbf{e}_2  + \mathbf{e}_4 , \;  \mathbf{e}_1 + \mathbf{e}_3 + \mathbf{e}_4,  \;  \mathbf{e}_1 + \mathbf{e}_2 +  \mathbf{e}_3 + \mathbf{e}_4 \}, 
\end{eqnarray*}
Considering these restrictions, it is clear that the interval $I_8$ corresponding to $F_8$ is 
$$
I_8 
= \{  \langle \mathbf{e}_1 + \mathbf{e}_4, \,  \mathbf{e}_3  \rangle, \; F_8 \}. 
$$
Thus $I_8 \ne \{F_8\}$ and so the converse of Lemma \ref{lem:5.13} is not true, in general. 
\end{exa}
It may be observed in the above example that $I_8 = \langle F_1, \dots , F_8 \rangle \setminus \langle F_1, \dots , F_7 \rangle$. This turns out to be a special case of a general phenomenon. 
In fact,  we have the following result, which may be regarded as a  $q$-analogue of  \cite[Proposition 7.2.2]{AB1}. 

\begin{thm}\label{thm:::5}
Let $F_1,\dots,F_t$ be a shelling of a shellable $q$-complex $\Delta$  on $E=\Fq^n$.
For any $j \in \NN$ with $ j \le t$, 
let $\Delta_j$ denote the subcomplex   $\langle F_1, \dots , F_j \rangle$ of $\Delta$ generated by $F_1, \dots , F_j$ (in particular, $\Delta_0= \emptyset$, as per our convention). Then 
\begin{equation}\label{eq:decomp}
\Delta_j  = I_j\cup \Delta_{j-1} \quad \text{and} \quad I_j \cap \Delta_{j-1}=\emptyset. 
\end{equation}
Consequently,  
we obtain a partition of $\Delta$ as a disjoint union of ``intervals'':
\begin{equation}\label{eq:partition}
\Delta = \coprod_{j=1}^t I_j.  
\end{equation}
\end{thm}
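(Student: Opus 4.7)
The plan is to establish the equality \eqref{eq:decomp} together with the claimed disjointness for each $i\in\{1,\dots,t\}$, and then obtain the partition \eqref{eq:partition} by iterating and using $\Delta_0=\emptyset$. A useful preliminary observation is the following reformulation of $\R_{i,j}$: the relation $\<x\>\oplus(F_i\cap F_j)=F_i$ forces $x\notin F_i\cap F_j$ and $\dim(F_i\cap F_j)=r-1$, and conversely any $x\in F_i\setminus(F_i\cap F_j)$ satisfies it as soon as $F_i\cap F_j$ is a hyperplane in $F_i$. Hence $\R_{i,j}\neq\emptyset$ if and only if $F_i\cap F_j$ is a hyperplane of $F_i$, and in that case $\R_{i,j}=F_i\setminus(F_i\cap F_j)$.

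For the disjointness $I_i\cap\Delta_{i-1}=\emptyset$, I would argue by contradiction. Suppose $A\in I_i\cap\Delta_{i-1}$. Then $A\in\langle F_i\rangle$ gives $A\subseteq F_i$, and $A\in\Delta_{i-1}$ yields $A\subseteq F_j$ for some $j<i$; so $A\subseteq F_i\cap F_j$. Apply Lemma~\ref{lem:ShellChar} to this pair to produce $k<i$ with $F_i\cap F_j\subseteq F_i\cap F_k$ and $\dim(F_i\cap F_k)=r-1$. By the reformulation above, $\R_{i,k}\neq\emptyset$ and $\R_{i,k}=F_i\setminus(F_i\cap F_k)$. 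Since $A\subseteq F_i\cap F_k$, we obtain $A\cap\R_{i,k}=\emptyset$, contradicting $A\in I_i$.

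For the inclusion $\Delta_i\subseteq I_i\cup\Delta_{i-1}$, I would take $A\in\Delta_i\setminus\Delta_{i-1}$. Since $A\in\langle F_1,\dots,F_i\rangle$, there exists $m\le i$ with $A\subseteq F_m$; the assumption $A\notin\Delta_{i-1}$ forces $m=i$, so $A\in\langle F_i\rangle$. To see that $A\in I_i$, fix any $j<i$ with $\R_{i,j}\neq\emptyset$; by the reformulation, $\R_{i,j}=F_i\setminus(F_i\cap F_j)$. If $A\cap\R_{i,j}$ were empty, then $A\subseteq F_i\cap F_j\subseteq F_j$, so $A\in\Delta_{i-1}$, contrary to our choice of $A$. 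The reverse inclusion $I_i\cup\Delta_{i-1}\subseteq\Delta_i$ is immediate from $I_i\subseteq\langle F_i\rangle\subseteq\Delta_i$ and $\Delta_{i-1}\subseteq\Delta_i$. Iterating \eqref{eq:decomp} from $\Delta_0=\emptyset$ then yields $\Delta=\Delta_t=\coprod_{i=1}^t I_i$, which is \eqref{eq:partition}.

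The main obstacle, I expect, lies in the disjointness step: this is exactly where shellability is used, via Lemma~\ref{lem:ShellChar}, to enlarge an intersection $F_i\cap F_j$ to a codimension-one subspace $F_i\cap F_k$ of $F_i$ coming from an earlier facet. Without this shelling property no such witnessing hyperplane need exist, and the argument for $I_i\cap\Delta_{i-1}=\emptyset$ collapses; the other steps are essentially bookkeeping, the whole point being to cleanly isolate the new contribution $I_i$ at each stage of the shelling.
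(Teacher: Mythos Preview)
Your proof is correct and follows essentially the same route as the paper's: both arguments use the reformulation $\R_{i,j}=F_i\setminus(F_i\cap F_j)$ when this is nonempty, prove the inclusion $\Delta_i\subseteq I_i\cup\Delta_{i-1}$ by showing that $A\cap\R_{i,j}=\emptyset$ forces $A\subseteq F_i\cap F_j$, and establish disjointness by invoking Lemma~\ref{lem:ShellChar} to produce an earlier facet $F_k$ with $F_i\cap F_k$ a hyperplane of $F_i$ containing $A$. Your disjointness argument is in fact slightly cleaner than the paper's, which first builds an auxiliary subspace $G=\langle x_j:j\in S\rangle\subseteq A$ from chosen witnesses $x_j\in A\cap\R_{i,j}$ before applying the shelling lemma; as your write-up shows, one may work directly with $A$ itself.
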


\begin{proof}
As noted in Remark~\ref{rem:6.2}. $I_1=\Delta_1$, and so  \eqref{eq:decomp} holds when $j=1$.
Now suppose $2\le j \le t$. The inclusion $I_j\cup \Delta_{j-1} \subseteq \Delta_j$ is obvious. To prove the other inclusion, suppose, on the contrary, there is $A\in \Delta_j$ such that $A\notin  I_j$ and $A\notin \Delta_{j-1}$. Then $A\subseteq F_j$. Moreover, there is $i\in \Np$ with 
$i < j$ such that $\R_{i}(F_j) \neq \emptyset$ and  $\R_{i}(F_j)\cap A=\emptyset$. Now, if $A\nsubseteq F_i\cap F_j$, then $A$ would contain an element of $\R_{i}(F_j)$, which is a 
contradiction. Thus, $A\subseteq F_i\cap F_j$, and therefore $A\in \Delta_{j-1}$, which is again a contradiction. 
This proves that 
$\Delta_j \subseteq I_j \cup \Delta_{j-1}$. Thus $\Delta_j  = I_j \cup \Delta_{j-1}$. 

Next, suppose there is $A\in I_j\cap \Delta_{j-1}$. 
Let $S:=\{i \in \Np: i < j\text{ and } \R_{i}(F_j) \neq \emptyset\}$.  Then $A\cap \R_{i}(F_j) \neq \emptyset$ for all $i \in S$,  and so we can choose $x_i \in A\cap \R_{i}(F_j)$ for each $i\in S$. Define $G:= \< \{ x_i \colon i\in S \}\>$. Now $G\in I_j$ and $G \subseteq  A \subseteq F_k$ for some $k<j$ (because $A\in \Delta_{j-1}$). Thus  $G\subseteq F_k\cap F_j$. By Lemma~\ref{lem:ShellChar},  there exists $\ell <j$ such that $ F_k\cap F_j \subseteq F_\ell\cap F_j$ and $\dim (F_\ell\cap F_j) = \dim F_j - 1$. Consequently, $\R_{\ell}(F_j)\ne \emptyset$, and so $\ell\in S$. But then 
$\langle x_\ell \rangle \oplus (F_\ell\cap F_j) = F_j$ (by the definition of $\R_{\ell}(F_j)$), which is a contradiction because 
$x_\ell\in G\subseteq F_\ell\cap F_j$.
This shows that $I_j \cap \Delta_{j-1}=\emptyset$ and thus  \eqref{eq:decomp} is proved. 

Finally,  \eqref{eq:partition} follows from  \eqref{eq:decomp} by noting that $\Delta = \Delta_t$ and $\Delta_1=I_1$.%
\end{proof}

\subsection{Acyclic Subcomplexes of Shellable $q$-Complexes}\label{ss-6.2}

Recall that for a finite dimensional vector space $F$ over $\Fq$, we use $\ring{\Sigma}(F)$ to denote the punctured $q$-complex formed by all the nonzero subspaces of $F$.

\begin{lem}\label{lem:4}
Let $F$ be a vector space of dimension $r$ over $\Fq$. Let $m \in \Np$  and let $G_1, \dots , G_{m}$
 be subspaces of $F$ of  dimension $r-1$. For $s \in \Np$ with $s \leq m$, define 
$$
 U_s := \{ x\in F\colon \<x\>\oplus G_s = F \} \quad \text{and} \quad 
 I := \{ A \in  \ring{\Sigma}(F) :  A\cap U_s \neq \emptyset \text{ for } s =1,\dots, m \}. 
$$
 Then 
 $$
 \ring{\Sigma}(F)\backslash I = \bigcup_{s=1}^m \ring{\Sigma}(G_s).
 $$
\end{lem}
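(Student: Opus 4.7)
The plan is to reduce the claimed set equality to an elementary membership check by unwinding the definitions. The key observation is that each $G_i$ is a hyperplane in $F$, which forces $U_i$ to be precisely the complement of $G_i$ in $F$, at least away from the zero vector.

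First, I would verify the following characterization: for $1 \le i \le \ell$, we have $U_i = F \setminus G_i$. Indeed, $0 \notin U_i$ since $\langle 0\rangle \oplus G_i = G_i \ne F$. For a nonzero $x \in F$, note that $\langle x \rangle + G_i$ has dimension either $r-1$ or $r$ depending on whether $x \in G_i$ or not, since $\dim G_i = r-1$. Hence $\langle x \rangle \oplus G_i = F$ holds if and only if $x \notin G_i$, as the dimension-$r$ case automatically makes the sum direct.

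Next, I would translate the condition defining $I$ into a subspace-containment statement. For $A \in \ring{\Sigma}(F)$, the condition $A \cap U_i \ne \emptyset$ is equivalent to $A$ containing a vector outside $G_i$, which (since $A$ is closed under addition with $G_i$ only playing a passive role) is equivalent to $A \not\subseteq G_i$. Therefore
\[
I = \{ A \in \ring{\Sigma}(F) : A \not\subseteq G_i \text{ for all } i = 1, \dots, \ell \}.
\]

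Taking complements within $\ring{\Sigma}(F)$ then gives
\[
\ring{\Sigma}(F) \setminus I = \{ A \in \ring{\Sigma}(F) : A \subseteq G_i \text{ for some } i \} = \bigcup_{i=1}^\ell \ring{\Sigma}(G_i),
\]
where the final equality is just the observation that a nonzero subspace of $F$ is contained in $G_i$ precisely when it is a nonzero subspace of $G_i$. There is no genuine obstacle here; the statement is essentially definitional once one identifies $U_i$ with $F \setminus G_i$, and the main (small) point is to confirm that the sum $\langle x \rangle + G_i$ being direct comes for free from the hyperplane dimension count.
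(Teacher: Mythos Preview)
Your proof is correct and is essentially the same as the paper's: both arguments reduce to the observation that $U_i = F\setminus G_i$, so $A\cap U_i=\emptyset$ if and only if $A\subseteq G_i$. The only difference is cosmetic---you state $U_i=F\setminus G_i$ explicitly and then take complements in one line, while the paper verifies the two inclusions separately with this identification implicit.
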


\begin{proof}
Suppose $A\in \ring{\Sigma}(F)\backslash I$. Then $A\cap U_{s} = \emptyset$ for some $s\in \Np$ with $ s\leq m$. We claim that $A \subseteq G_{s}$. 
Indeed, if there is $x\in A\setminus G_{s}$, then 
$\<x\>\oplus G_{s} = F$. But then $x\in A\cap U_{s}$ which is a contradiction. Therefore $A\in \ring{\Sigma}(G_{s})$.

On the other hand, if $A$ is a nonzero subspace of $G_s$ for some $s\in \{1, \dots , m\}$, then 
any element $x$ of $A$ cannot be in $U_s$ because $\<x\>+G_s = G_s$. Thus $A\cap U_s = \emptyset$. Hence $A\notin I$. This proves the lemma. 
\end{proof}

The above lemma says that ${\Sigma}(F)\backslash I$ is a pure $q$-complex with facets $G_1, \dots, G_m$.
We show below that the corresponding punctured $q$-complex is particularly nice.

\begin{cor}\label{cor:contra}
Let the notations and hypothesis be as in Lemma \ref{lem:4}. Further let $U:=U_1 \cup \dots \cup U_{m}$. 
If $U \neq (F\backslash \{\0\})$ and if $x$ is any nonzero element of $F\setminus U$, then 
$\ring{\Sigma}(F)\backslash I$ is a cone with apex $x$. Consequently, $\ring{\Sigma}(F)\backslash I$ is contractible. 
\end{cor}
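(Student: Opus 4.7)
The plan is to verify the hypotheses of Lemma~\ref{pro:::6} directly, taking the one-dimensional space $\<x\>$ as the apex.

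First, I would identify $U_i$ explicitly. Since $\dim G_i = r-1$, a dimension count shows that for a nonzero $v\in F$, the condition $\<v\>\oplus G_i = F$ is equivalent to $v\notin G_i$; and $\0\notin U_i$ trivially. Hence $U_i = F\setminus G_i$, so
$$
U = \bigcup_{i=1}^{\ell}(F\setminus G_i) = F\setminus \bigcap_{i=1}^{\ell} G_i.
$$
The hypothesis $U\neq F\setminus\{\0\}$ thus reads $\bigcap_{i=1}^{\ell}G_i \neq \{\0\}$, and the chosen nonzero $x\in F\setminus U$ lies in $G_i$ for every $i$. In particular $\<x\>\subseteq G_i$ for each $i$, so $\<x\>\in \ring{\Sigma}(G_1)\subseteq \ring{\Sigma}(F)\setminus I$ by Lemma~\ref{lem:4}, and $\<x\>$ is contained in every $G_i$.

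Next, Lemma~\ref{lem:4} already gives $\ring{\Sigma}(F)\setminus I = \bigcup_{i=1}^{\ell}\ring{\Sigma}(G_i)$. Since each $G_i$ has dimension $r-1$, after discarding repetitions the distinct $G_i$'s are precisely the maximal faces (facets) of $\ring{\Sigma}(F)\setminus I$, and by the previous step each of them contains the candidate apex $\<x\>$. To apply Lemma~\ref{pro:::6} I must still check the intersection-closure hypothesis: for any face $A$ of $\ring{\Sigma}(F)\setminus I$, the facets containing $A$ form some subcollection $G_{j_1},\dots,G_{j_m}$, and their intersection $J := G_{j_1}\cap\dots\cap G_{j_m}$ is a subspace of $G_{j_1}$ containing the nonzero space $A$, so $J\in \ring{\Sigma}(G_{j_1})\subseteq \ring{\Sigma}(F)\setminus I$, as required.

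With these two verifications in place, Lemma~\ref{pro:::6} immediately shows that $\ring{\Sigma}(F)\setminus I$ is a cone with apex $\<x\>$, and in particular is contractible. I do not foresee a real obstacle here; the only points needing a little care are interpreting ``apex $x$'' in the statement as the face $\<x\>$ in the sense of Definition~\ref{defn:::30}, and the elementary identification $U_i = F\setminus G_i$, after which the corollary reduces cleanly to Lemmas~\ref{lem:4} and~\ref{pro:::6}.
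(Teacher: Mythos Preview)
Your proof is correct and follows essentially the same route as the paper: show that a nonzero $x\notin U$ lies in every $G_i$, invoke Lemma~\ref{lem:4} to identify the facets of $\ring{\Sigma}(F)\setminus I$ as the $G_i$, and conclude via Lemma~\ref{pro:::6}. You are in fact a bit more careful than the paper, since you explicitly verify the intersection-closure hypothesis in Lemma~\ref{pro:::6} (which the paper leaves implicit) and spell out the identification $U_i=F\setminus G_i$.
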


\begin{proof}
Suppose $U \neq (F\backslash \{\0\})$ and $x$ is any nonzero element of $F\setminus U$. We claim that $x\in G_s$ for every $s\in \{1, \dots , m\}$. To see this, suppose $x \in F\setminus G_s$ for some $s\in \{1, \dots , m\}$. Then $\<x\>\oplus G_s = F$, and so $x\in U_s$. But this is a contradiction, since $x\notin U$. Thus the claim is proved. Consequently, 
in view of Lemma \ref{lem:4}, we see that $\<x\>$ is contained in every facet of $\ring{\Sigma}(F)\backslash I$. Thus $\ring{\Sigma}(F)\backslash I$ is a cone with apex $x$.
The last assertion follows from Lemma~\ref{pro:::6}. 
\end{proof}

\begin{cor}\label{cor:3}
Let $F_1,\dots,F_t$ be a shelling of a shellable $q$-complex $\Delta$  on $E=\Fq^n$. Suppose there is  $j \in \Np$ with $2\leq j \leq t$ such that 
\begin{equation}\label{RijneFj}
\bigcup_{i=1}^{j-1} \R_{i}(F_j) \neq F_j \backslash \{ \mathbf{0}  \} . 
\end{equation}
Then $\ring{\Sigma}(F_j)\backslash I_j$ is contractible.
\end{cor}

\begin{proof} 
If   in Lemma~\ref{lem:4}, we take
$$
F = F_j \quad \text{and} \quad \{G_1, \dots , G_m\} = \{F_i\cap F_j : 1\le i < j \text{ and } \R_i(F_j) \ne \emptyset\},
$$
then we see that $G_1, \dots, G_m$ are subspaces of $F$ of codimension $1$, 
and moreover, 
$U = \cup_{i=1}^{j-1} \R_i(F_j)$ and 
$I = I_j$. Thus the desired result 
follows from Corollary \ref{cor:contra}.
\end{proof}

The following result can be viewed as an analogue for $q$-complexes of 
Bj\" orner's Acyclicity Lemma \cite[Lemma 7.7.1]{AB1} for shellable simplicial complexes. 

\begin{thm}\label{thm::9}
Suppose  $F_1,\dots,F_\ell$ is a shelling of a shellable $q$-complex $\Delta'$  on $E$ of positive dimension $d$, and let 
$\Delta_j := \< F_1, \dots , F_j \>$ for $1\le j \le \ell$. Assume that 
\eqref{RijneFj} holds for each $j = 2, \dots , \ell$.
Then $\ring{\Delta}'$ is acyclic. 
\end{thm}

\begin{proof}
We prove by induction on $i$ ($1\le i \le \ell$) that each $\ring{\Delta}_i$ is acyclic.
Notice that each $\Delta_i$ is shellable.
Since $\ring{\Delta}_1=\ring{\Sigma}(F_1)$, has a unique maximal element, by Lemma~\ref{pro:::6} we see that it is contractible, and therefore acyclic. Now assume that $1< j \le \ell$ and $\ring{\Delta}_{j-1}$ is acyclic. 
We want to show that $\ring{\Delta}_{j}$ is also acyclic. 
Note that  $\ring{\Sigma}(F_{j})$ is contractible, and hence acyclic, while $\ring{\Delta}_{j-1}$ is acyclic by the induction hypothesis.  
Moreover, by Theorem~\ref{thm:::5}, $\Delta_{j-1} = \Delta_j  \setminus I_j$, and by taking intersections  with $\ring{\Sigma}(F_{j}) $, we obtain 
$\ring{\Delta}_{j-1}\cap \ring{\Sigma}(F_{j}) = \ring{\Sigma}(F_{j})\backslash I_{j}$. So by Corollary \ref{cor:3}, it follows that $\ring{\Delta}_{j-1}\cap \ring{\Sigma}(F_{j}) $ is contractible.
%
Hence, by applying a Mayer-Vietoris sequence, we see that $\ring{\Delta}_{j} = \ring{\Delta}_{j-1} \cup \ring{\Sigma}(F_{j})$ is acyclic.
This completes the proof. 
\end{proof}

\subsection{Computation of Homology of Shellable $q$-Complexes}\label{ss-6.3}
It may be pertinent to 
begin by recalling 
how one determines the homology in the classical case of a shellable simplicial complex, say $\Delta$. The first step is to observe that the subcomplex $\Delta'$ generated by the facets $F$ of $\Delta$ with $\R(F) \ne F$ is acyclic. In the next step   we attach to $\Delta'$ a facet $F$ of $\Delta$ with 
$\R(F) = F$ and use the Mayer-Vietoris sequence to determine the homology of $\Delta'\cup \langle F\rangle$, and then use an inductive argument. 
See, for example, \cite[\S~7.7]{AB1} or \cite[pp. 138--139]{GSSV}. 
This approach works because the intersection $\Delta'\cap \langle F\rangle$ is the boundary complex 
 of $F$. And this boundary complex being a sphere, we know its homology.

Now let us turn to a shellable $q$-complex $\Delta$  on $E=\Fq^n$. We can similarly consider the subcomplex $\Delta'$ consisting of the facets $F_j$ for which  \eqref{RijneFj} holds. Then  Theorem \ref{thm::9} would imply that $\Delta'$ is acyclic, provided  the ordering of facets of $\Delta$ restricted on the facets of $\Delta^{\prime}$ gives a shelling of $\Delta^{\prime}$. 
Next, if we were to attach to $\Delta^{\prime}$ a facet $F =F_j$ for which  \eqref{RijneFj} does not hold, then 
 we do not know whether or not the intersection $\Delta^{\prime} \cap \ring{\Sigma}(F_i)$ is a (punctured) $q$-sphere. But if one could overcome these difficulties, then the homology can certainly be computed as shown by the following result, where we have allowed ourselves a generous hypothesis. 
 
 \begin{thm}\label{thm:5.18}
Let $\Delta$ be a pure $q$-complex on $E=\Fq^n$ of positive dimension $d$ such that $\Delta$ admits a shelling $F_1, \ldots, F_t$. Let 
$\Delta^{\prime} :=\<F_j : j \in J'\>$, where 
\begin{equation}\label{eq:JDelta}
J:=\! \bigg\{ j\in \{2, \dots, t\} : \bigcup_{i=1}^{j-1} \R_{i}(F_j) = F_j \backslash \{ \mathbf{0}  \} \bigg\} \quad \text{and}  \quad 
J':=  \{1, \dots, t\} \setminus J. 
\end{equation}
Assume that the ordering $F_1, \ldots, F_t$ 
restricted on the facets of $\Delta^{\prime}$ gives a shelling of $\Delta^{\prime}$ and 
that 
$\ring{\Sigma}(F_j) \cap \ring{\Delta}^{\prime}$ is the punctured $q$-sphere $\ring{S}_q^{d-1}$ for each $j \in J$.  Then 
\[
\widetilde{H_p}(\ring{\Delta}) = \begin{cases}
\ZZ\strut^{|J|q^{d(d-1)/2}}& \text{if } p=d-1,\\
0 & \text{otherwise.} 
\end{cases}
\] 
\end{thm}

\begin{proof}
The facets of $\Delta^{\prime}$ are $F_j$ as $j$ varies over $J'$, and  the ordering of these induced by the linear ordering $F_1, \dots , F_t$ is 
a shelling of $\Delta^{\prime}$. Moreover,   for $2\le j\le t$,
$$
j\in J' \Longrightarrow  \bigcup_{i=1}^{j-1} \R_{i}(F_j) \neq F_j \backslash \{ \mathbf{0}  \}  \Longrightarrow 
\mathop{\bigcup_{1\le i < j}}_{i \in J'} \R_{i}(F_j) \neq F_j \backslash \{ \mathbf{0}  \},
$$
because $ \R_{i}(F_j) \subseteq F_j \backslash \{ \mathbf{0} \}$ for all $i\ne j$. 
Hence it 
follows from 
Theorem \ref{thm::9} that $\Delta^{\prime}$ is acyclic.
%
Now using 
the second assumption together with suitable Mayer-Vietoris sequences and proceeding as in the proof of Theorem \ref{thm:5.7}, we obtain the desired result about the reduced homology groups of $\ring{\Delta}$. 
\end{proof}
 
 The above result explicitly determines the singular homology of arbitrary shellable $q$-complexes, provided the hypothesis of Theorem \ref{thm:5.18} is satisfied. We show below that this hypothesis is satisfied
by shellable $q$-complexes for which the converse of Lemma \ref{lem:5.13} is true.
%
 
\begin{pro}
Let $\Delta$ be a pure $q$-complex on $E=\Fq^n$ of positive dimension $d$ such that $\Delta$ admits a shelling $F_1, \ldots, F_t$. 
Suppose for any $j \in \{2, \dots , t\}$,
 $$
\bigcup_{1\le i <j} \R_{i}(F_j) = F_j \backslash \{\mathbf{0}\} \Longrightarrow   I_j = \{F_j \} .
$$
Also, let $J$ and $J'$ be as in \eqref{eq:JDelta}. 
Then $\Delta' :=\<F_j : j \in J'\>$ satisfies the following. 
\begin{enumerate}
\item[{\rm (i)}]
The ordering $F_1, \ldots, F_t$ 
restricted on the facets of $\Delta^{\prime}$ gives a shelling of $\Delta^{\prime}$.
\item[{\rm (ii)}]
$\ring{\Sigma}(F_j) \cap \ring{\Delta}^{\prime}$ is the punctured $q$-sphere $\ring{S}_q^{d-1}$ for each $j \in J$. 
\end{enumerate}
\end{pro}

\begin{proof}
For $1\le j \le t$, let $\Delta_j:= \< F_1, \dots , F_j \>$. 
Note that the facets of $\Delta'$ are given by $F_j$, where $j$ varies over $J'$. Evidently, $\Delta'$ is a pure complex on $E$ of dimension $d$. 
To show that it is shellable, let $i, j\in J'$ with $i< j$. By Lemma~\ref{lem:ShellChar} (applied to $\Delta$),  there is $k_1\in \{1, \dots , t\}$ with $k_1< j$ such that $F_i \cap F_j \subseteq F_{k_1} \cap F_j$ and 
$\dim F_{k_1} \cap F_j = d-1$. If $k_1 \in J'$, then we are done. If not, then $k_1 \in J$ and in particular, $k_1 \ge 2$. By our hypothesis, $I_{k_1} =\{F_{k_1}\}$.  Hence by Theorem~\ref{thm:::5}, $\Delta_{k_1} \setminus \Delta_{k_1-1} = \{F_{k_1}\}$. Consequently, $F_{k_1} \cap F_j \subseteq F_{k_2}$ for some $k_2\in \Np$ with $k_2 < k_1< j$. This implies that 
$F_{k_1} \cap F_j \subseteq F_{k_2} \cap F_j$, and since $\dim (F_{k_1}\cap F_j) = d-1$, we obtain $F_{k_1} \cap F_j = F_{k_2} \cap F_j$. Again, if $k_2\in J'$, then we are done. Or else, $k_2\in J$, and we can proceed as before to obtain $k_3\in \Np$ with $k_3 < k_2 < k_1< j$ and $F_{k_3} \cap F_j = F_{k_2} \cap F_j$. Since $k_1, k_2, k_3, \dots $ are positive integers, this process can not continue indefinitely. Hence there is $k\in J'$ with $k< j$ such that 
$F_i \cap F_j \subseteq F_{k} \cap F_j$ and 
$\dim (F_{k} \cap F_j) = d-1$. This proves that $\Delta'$ is shellable and the ordering $F_1, \ldots, F_t$ 
restricted on the facets of $\Delta^{\prime}$ gives a shelling of $\Delta^{\prime}$. Thus (i) is proved. 

Next, let $j\in J$. Then $j\ge 2$ and $F_j \notin \Delta'$. Hence $\ring{\Sigma}(F_j) \cap \ring{\Delta}^{\prime} \subseteq \ring{\Sigma}(F_j) \setminus \{F_j\}$. We claim that the reverse inclusion also holds, i.e., 
$\ring{\Sigma}(F_j) \setminus \{F_j\} \subseteq \ring{\Sigma}(F_j) \cap \ring{\Delta}^{\prime}$. This is trivial if $d=1$. So we may assume that $d\ge 2$. 
Let $F$ be a facet of $\ring{\Sigma}(F_j) \setminus \{F_j\} $, i.e., a nonzero subspace of $F_j$ with $\dim F = d-1$.
Then $F\in \Delta_j$ and since $j\in J$, by Theorem~\ref{thm:::5} and our hypothesis, we see that 
$F\notin \{F_j\} = \Delta_j \setminus \Delta_{j-1}$. Thus, $F\in \Delta_{j-1}$, i.e., $F\subset F_i$ for some $i\in \Np$ with $i< j$. Thus $F\subseteq F_i\cap F_j$, and since $\dim F = d-1$, we see that $F = F_i\cap F_j$. Now as noted in the previous paragraph, we can write $F_i \cap F_j = F_k \cap F_j$ for some $k \in J'$ with $k < j$. In particular, 
$F$ is a subspace of $F_k$ and so $F\in \Delta'$. This proves that $\ring{\Sigma}(F_j) \setminus \{F_j\} \subseteq \ring{\Sigma}(F_j) \cap \ring{\Delta}^{\prime}$. Consequently, $ \ring{\Sigma}(F_j) \cap \ring{\Delta}^{\prime}$ is the $q$-sphere  $\ring{\Sigma}(F_j) \setminus \{F_j\}$ of dimension $d-1$.
\end{proof}

\begin{rem}
Consider the shellable $q$-complex $\Delta_{C}$ of Example \ref{exa:6.4}. We have seen that the converse of 
Lemma \ref{lem:5.13} is not true for this. We have also seen that $\Delta_C$ has $14$ facets $F_1, \dots , F_{14}$, and we have determined the sets $\R_i(F_8)$ for $1\le i \le 7$. The remaining sets 
$\R_i(F_j)$ can also be easily computed. 
We are of course mainly interested in the unions $\mathscr{R}_j:= \cup_{i=1}^{j-1} \R_i(F_j)$ for $2\le j \le 14$, and it turns out that 
\begin{eqnarray*}
\mathscr{R}_2 &=& \{ \mathbf{e}_1 +\mathbf{e}_2,\; \mathbf{e}_1+\mathbf{e}_2+\mathbf{e}_3,\; \mathbf{e}_1+\mathbf{e}_2+\mathbf{e}_4,\;\mathbf{e}_1+\mathbf{e}_2+\mathbf{e}_3+\mathbf{e}_4 \},\\
\mathscr{R}_3 &=& \{\mathbf{e}_1,\; \mathbf{e}_1+\mathbf{e}_2,\; \mathbf{e}_1+\mathbf{e}_4, \;\mathbf{e}_1+\mathbf{e}_2+\mathbf{e}_4,\; \mathbf{e}_2, \;\mathbf{e}_2+\mathbf{e}_4 \},\\
\mathscr{R}_4 &=& \{\mathbf{e}_1+\mathbf{e}_3, \mathbf{e}_1+\mathbf{e}_2+\mathbf{e}_3, \mathbf{e}_1+\mathbf{e}_3+\mathbf{e}_4, \mathbf{e}_1+\mathbf{e}_2+\mathbf{e}_3+\mathbf{e}_4, \mathbf{e}_2, \mathbf{e}_2+\mathbf{e}_4\},\\
\mathscr{R}_5 &=& \{\mathbf{e}_1, \mathbf{e}_1+\mathbf{e}_2+\mathbf{e}_3, \mathbf{e}_1+\mathbf{e}_4, \mathbf{e}_1+\mathbf{e}_2+\mathbf{e}_3+\mathbf{e}_4, \mathbf{e}_2+\mathbf{e}_3, \mathbf{e}_2+\mathbf{e}_3+\mathbf{e}_4\},\\
\mathscr{R}_6 &=& \{\mathbf{e}_1+\mathbf{e}_3, \mathbf{e}_1+\mathbf{e}_2, \mathbf{e}_1+\mathbf{e}_3+\mathbf{e}_4, \mathbf{e}_1+\mathbf{e}_2+\mathbf{e}_4, \mathbf{e}_2+\mathbf{e}_3, \mathbf{e}_2+\mathbf{e}_3+\mathbf{e}_4\},\text{ and} \\
\mathscr{R}_j &=& F_j \backslash \{ \mathbf{0}  \} \text{ for } j=7, \ldots, 14.
\end{eqnarray*}
It can thus be seen that if $J$ and $J'$ are as in \eqref{eq:JDelta} with $\Delta= \Delta_C$, then 
 $$
 J = \{ 7,8, 9,10, 11, 12, 13, 14\} \quad \text{and} \quad J' = \{1,2,3,4, 5,6\}.
$$
Moreover, it is clear from the description in Example \ref{exa:6.4} of the facets $F_1, \dots , F_{14}$ of $\Delta_C$ that $\< \mathbf{e}_4 \> \subseteq F_j$ for  all $j\in J'$. Thus, by Lemma \ref{pro:::6}, $\Delta'_C:= \< F_j : j\in J'\>$ is acyclic. On the other hand, it can be seen that  for this $q$-complex of dimension $3$, 
$$
\ring{\Sigma}(F_7) \cap \ring{\Delta}^{\prime}_C =\ring{\Sigma}(F_7) \backslash I_7 =\ring{\Sigma}(F_7)\, \backslash \,\{\langle \mathbf{e}_1, \mathbf{e}_3 \rangle, F_7\},
$$ 
and this is not a punctured $q$-sphere of dimension $2$. Thus, we see that $\Delta_{C}$ does not satisfy one of  the hypotheses of Theorem \ref{thm:5.18}. The determination of singular homology of shellable $q$-complexes such as $\Delta_{C}$, which do not satisfy the hypothesis of Theorem~\ref{thm:5.18}, remains an open question. 
\end{rem}

\end{document}